\documentclass[a4paper, 11pt]{article}
\usepackage[mathscr]{eucal}
\usepackage{amssymb}
\usepackage{latexsym}
\usepackage{amsthm}
\usepackage{amsmath}
\usepackage[dvips]{graphicx}
\usepackage{psfrag}
\usepackage{a4wide}

\newtheorem{theorem}{Theorem}[section]
\newtheorem{proposition}[theorem]{Proposition}
\newtheorem{lemma}[theorem]{Lemma}
\newtheorem{corollary}[theorem]{Corollary}
\theoremstyle{definition}
\newtheorem{definition}[theorem]{Definition}
\newtheorem*{constructionA}{Construction A}

\newtheorem{problem}[theorem]{Problem}

\newtheorem{example}[theorem]{Example}

\makeatletter
\@addtoreset{equation}{section}

\makeatother
\title{Combinatorial characterzations of $T$-designs in the nonbinary Johnson scheme}
\author{Hiroshi Nozaki\thanks{Department of Mathematics Education, 
	Aichi University of Education, 
	1 Hirosawa, Igaya-cho, 
	Kariya, Aichi 448-8542, 
	Japan. {\tt hnozaki@auecc.aichi-edu.ac.jp}} 
    \and Yuta Watanabe\thanks{Department of Mathematics Education, 
	Aichi University of Education, 
	1 Hirosawa, Igaya-cho, 
	Kariya, Aichi 448-8542, 
	Japan. {\tt ywatanabe@auecc.aichi-edu.ac.jp}}}

\begin{document}

\maketitle

\begin{center}
\textit{Dedicated to Professor Paul Terwilliger on the occasion of his 70th birthday.}
\end{center}

\renewcommand{\thefootnote}{\fnsymbol{footnote}}
\footnote[0]{2020 Mathematics Subject Classification: 05E30 (05B30)
}

\begin{abstract}
We study $T$-designs in the nonbinary Johnson scheme. This scheme generalizes both the Johnson and 
Hamming schemes and admits a bivariate $Q$-polynomial structure. Zhu~(2021) provided a 
combinatorial characterization of $T$-designs in this scheme for certain index sets~$T$, using 
a relationship between $T$-designs in the nonbinary Johnson scheme and relative designs in 
the nonbinary Hamming scheme. In this paper, we obtain a characterization that applies to a 
strictly larger class of index sets $T$, based on a methodological extension of Delsarte’s 
original framework (1973). This new characterization naturally recovers classical block designs and orthogonal arrays as special cases. To describe these designs uniformly, we 
introduce $(r,s)$-designs, a new family of combinatorial objects that arise naturally from our 
characterization. We also derive absolute lower bounds on the cardinality of $(r,s)$-designs 
from the multiplicities of the primitive idempotents of the nonbinary Johnson scheme, and 
construct examples with index $\lambda=1$ that attain certain natural lower bounds.
\end{abstract}

\noindent
\textbf{Keywords:} Nonbinary Johnson scheme, $T$-design, Delsarte theory, bivariate $Q$-polynomial property, block design, orthogonal array
\section{Introduction}
Delsarte \cite{D73} introduced the notion of designs in association schemes, thereby providing a unified framework that incorporates classical combinatorial structures such as block designs and orthogonal arrays.
Block designs are identified with designs in the Johnson scheme, and orthogonal arrays are identified with designs in the Hamming schemes.
These association schemes are $Q$-polynomial schemes \cite{D73}, whose designs are well analyzed using the corresponding univariate orthogonal polynomials. 

For a set $T$ of natural numbers, a $T$-design in an association scheme $(X,\mathcal{R})$ is a finite subset $Y$ of the underlying set $X$ that satisfies
\[
E_i \varphi_Y = 0 \qquad \text{for each $i \in T$,}
\]
where $E_i$ is a primitive idempotent of the Bose--Mesner algebra of $(X,\mathcal{R})$ and $\varphi_Y$ is the characteristic vector of $Y$.
For $Q$-polynomial association schemes, we have a natural ordering of $\{E_i\}$, that is, there exists a polynomial $v_i$ of degree $i$ such that $v_i(E_1) = E_i$ with the entry-wise matrix product. 
This natural ordering helps to provide a good combinatorial characterization of certain association schemes. 
For instance, a $\{1,2,\ldots,t\}$-design in the Johnson scheme forms a block $t$-design, and a $\{1,2,\ldots,t\}$-design in the Hamming scheme forms an orthogonal array with strength $t$ \cite{D73}.

The nonbinary Johnson scheme is a symmetric association scheme that generalizes both the Johnson 
schemes and the Hamming schemes. 
The character tables of this scheme are written in terms of bivariate orthogonal polynomials 
\cite{Dun76, TAG85}. 
In \cite{CVZZ24}, it is proved that the nonbinary Johnson scheme, under a specific restriction, 
is a bivariate $Q$-polynomial scheme in the sense of \cite{BCPVZ24}. 
Bannai et al.\ \cite{BKZZ25} established a more general notion of multivariate 
$Q$-polynomial association schemes than that of \cite{CVZZ24}, using arbitrary monomial orders. 
Bannai et al.\ \cite{BKZZ24} further proved that the nonbinary Johnson scheme is a bivariate 
$Q$-polynomial scheme in their sense; this result does not require the restriction imposed 
in \cite{CVZZ24}. 

In pursuing a generalization of the $Q$-polynomial framework, it is natural to develop a 
combinatorial characterization of $T$-designs that is compatible with the natural ordering arising from a 
multivariate $Q$-polynomial scheme. 
As a first step in this direction, we focus on the nonbinary Johnson scheme.  
Zhu \cite{Z21} provided a combinatorial characterization of $T$-designs in the nonbinary Johnson 
schemes for specific $T$. 
This characterization is proved via the relationship between $T$-designs in the nonbinary Johnson 
schemes and relative designs in the nonbinary Hamming schemes.

In this paper, we prove a combinatorial characterization of $T$-designs via a purely 
methodological extension of Delsarte's original approach \cite{D73}, for certain $T$ that 
properly contains the specific cases considered by Zhu. 
This characterization naturally includes the notions of block designs and orthogonal arrays. 

The combinatorial property of the designs is expressed as follows. 

\begin{definition}[$(r,s)$-design]\label{def:rs-design_1}
Let $M$ be a $v\times n$ array with entries in $\{0,1,\ldots,q-1\}$ such that each row has exactly 
$w$ non-zero entries.  
Let $r,s$ be nonnegative integers with $s\leq r\leq w$. 
The matrix $M$ is called an $(r,s)$-design if for any choice of $r$ columns, the subarray obtained by taking all rows whose entries are non-zero in these $r$ columns has the following property: 
in any $s$ of these $r$ columns, every $s$-tuple from $\{1,\ldots,q-1\}$ occurs the same number of times.
\end{definition}

See Figures~\ref{fig:example-array-5col} and \ref{fig:example-array} for nontrivial examples of 
$(2,1)$-designs. 
For $q=2$, an $(r,s)$-design is equivalent to a block $r$-design for any $s$, where the rows 
are regarded as characteristic vectors of blocks. 
For $w=n$, an $(r,s)$-design is an orthogonal array of strength $s$ for any $r$. 
For $r=s$, an $(r,s)$-design coincides with the characterization of Zhu \cite{Z21}. In particular, it includes an $H$-design \cite{Hanani63} as a special case. 
Aside from these special cases, the concept of $(r,s)$-designs does not appear to have been studied 
in the literature, to the best of the author's knowledge.  
The present paper therefore opens a new direction in the study of designs.

This paper is organized as follows. 
Section~\ref{sec:2} introduces the nonbinary Johnson scheme and summarizes basic results proved in \cite{TAG85}, which will be used later. 
In Section~\ref{sec:3}, we prove a combinatorial characterization of $T$-designs in the nonbinary Johnson scheme via a methodological extension of Delsarte's original approach. 
Section~\ref{sec:4} provides several combinatorial properties of $(r,s)$-designs, which extend the combinatorial properties of the index $\lambda$ in classical block designs. 
In Section~\ref{sec:5}, we derive absolute lower bounds on the cardinality of $(r,s)$-designs from the multiplicities of the primitive idempotents of the nonbinary Johnson scheme. 
In Section~\ref{sec:6}, we construct $(r,s)$-designs with index $\lambda=1$, which are minimal designs attaining a natural lower bound.

\section{Preliminaries} \label{sec:2}
In this section, we recall the basic terminology and the fundamental results from \cite{TAG85}. 

Let $F$ be a finite set of cardinality $q$; we usually take $F = \{0,1,\ldots,q-1\}$. 
For $x \in F^n$, the (Hamming) \emph{weight} $w_H(x)$ is the number of nonzero coordinates in $x$. 
Let $W_w(n,q)$ denote the subset of $F^n$ consisting of all vectors of weight $w$. 
For $x = (x_1, \ldots, x_n)$ and $y = (y_1, \ldots, y_n)$ in $F^n$, define
\[
e(x,y) = |\{\, i \mid x_i = y_i \ne 0 \,\}| 
\quad \text{and} \quad 
n(x,y) = |\{\, i \mid x_i \ne 0,\; y_i \ne 0 \,\}|.
\]
With the relations $R_{ij}$ on $W_w(n,q)$ defined by
\[
R_{ij} = \{\, (x,y) \mid e(x,y) = w - i,\; n(x,y) = w - j \,\},
\]
the pair $J_q(w,n)=(W_w(n,q), \{R_{ij}\})$ forms a symmetric association scheme, 
which is called the \emph{nonbinary Johnson scheme} for $q\geq 3$. 

Let $\mathfrak{B}$ be the Bose--Mesner algebra of $J_q(w,n)$.  
The primitive idempotents $E_{ij}$ of $\mathfrak{B}$ are indexed by 
\[
L=\{(i,j) \mid 0\leq j \leq i \leq w, i-j\leq m\},
\]
where $m=\min\{w,n-w\}$. 
In \cite{TAG85}, the primitive idempotents are naturally obtained from certain matrices $C_{rs} \in \mathfrak{B}$, which play a combinatorial role well. 

We define the matrix $C_{rs}$ using the characters of the multiplicative abelian group $\overline{F}=F\setminus \{0\}$. 
Let $\psi_a$ $(a \in \overline{F})$ be the complex characters of $\overline{F}$, numbered in such a way that  
$\psi_a(b) = \psi_b(a)$ for all $a,b \in \overline{F}$.  
For $x, y \in F^n$ and $a \in \overline{F}$, define $\psi_a(0) = 0$ and  
\[
(x, y) = \prod_{i \in \overline{x}} \psi_{x_i}(y_i),
\]
where $\overline{x} = \{ i \mid x_i \ne 0 \}$ is the support of $x$.  
Notice that $(x, y) \ne 0$ if and only if $\overline{x} \subset \overline{y}$.

Define the \emph{multiplicative weight} $w_M(x)$ of a vector $x \in F^n$ by  
\[
w_M(x) = |\{\, i \mid x_i \ne 0,\; x_i \ne 1 \,\}|.
\]
Let 
\[W_{rs}=\{x \in F^n \mid w_H(x)=r,w_M(x)=s\}\] for $0\leq s\leq r \leq n$.
Let $\mathbb{C}(X,Y)$ be the set of matrices with rows indexed by $X$ and columns indexed by $Y$. 
Using the matrix $A_{rs}=((a,x))_{x\in W_w(n,q), a \in W_{rs}} \in \mathbb{C}(W_w(n,q),W_{rs})$, the matrix $C_{rs}$ is defined by
\[
C_{rs}=A_{rs} \overline{A_{rs}^\top}=\left(\sum_{a \in W_{rs}}(a,x)\overline{(a,y)} \right)_{x,y \in W_w(n,q)}
\]
for $(r,s) \in L$, see Equation (23) in \cite{TAG85}. 

For $(r,s)\in L$, let $\mathfrak{B}_{rs}=\operatorname{Span}_{\mathbb{C}} \{C_{kh}\}_{(k,h)\le (r,s)}$ 
be the subalgebra of the Bose--Mesner algebra $\mathfrak{B}$ spanned by the matrices 
$C_{kh}$ for all $(k,h)\in L$ such that $(k,h)\le (r,s)$ in the lexicographic order.  
The dimensions of the algebras $\mathfrak{B}_{rs}$ increase by one as $(r,s)$ 
increases in the lexicographic order.  
Define $E_{rs}\in\mathfrak{B}_{rs}$ to be the primitive idempotent that first appears 
in $\mathfrak{B}_{rs}$. Then, $\{E_{rs}\}_{(r,s) \in L}$ forms the primitive idempotents of $\mathfrak{B}$.

\section{Combinatorial characterizations of $T$-designs in $J_q(w,n)$}  \label{sec:3}
Let $\mathfrak{B}$ be the Bose--Mesner algebra of the nonbinary Johnson schemes $J_q(w,n)$. 
A matrix $M\in \mathfrak{B}$ can be expressed by the linear combination of the primitive idempotents $\{E_{ij}\}_{(i,j) \in L}$ as $M=\sum_{(i,j) \in L} c_{ij} E_{ij}$ with $c_{ij} \in \mathbb{C}$. 
The matrix $E_{ij}$ is called a {\it component} of $M$ if the corresponding coefficients $c_{ij}$ is not $0$. 
Let $\mathfrak{C}(M)$ be the set of components of $M$. 
\begin{lemma} \label{lem:component}
Let $(r,s) \in L$. 
The set of components of $C_{rs} \in \mathfrak{B}$ is 
\[
\mathfrak{C}(C_{rs})=\{E_{is}  \mid (i,s) \in L, i\leq r  \}.
\]
\end{lemma}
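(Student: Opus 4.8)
The plan is to analyze the matrix $C_{rs} = A_{rs}\overline{A_{rs}^\top}$ through its relationship to the algebra filtration $\mathfrak{B}_{rs}$ and the way the primitive idempotents $E_{ij}$ are extracted from it. By definition $C_{rs} \in \mathfrak{B}_{rs} = \operatorname{Span}_{\mathbb{C}}\{C_{kh}\}_{(k,h)\le(r,s)}$, and since $\mathfrak{B}_{rs}$ is itself spanned by idempotents (it is a subalgebra of the commutative semisimple algebra $\mathfrak{B}$), every component $E_{ij}$ of $C_{rs}$ must lie among the idempotents $E_{kh}$ with $(k,h)\le(r,s)$ in lexicographic order. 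So the first step is to pin down exactly which idempotents appear. I would argue that $C_{rs}$ is positive semidefinite (being of the form $A\overline{A^\top}$), so all its eigenvalues $c_{ij}$ are nonnegative reals, and that $C_{rs} - (\text{lower-order terms})$ picks out $E_{rs}$ with a strictly positive coefficient — this is essentially the content of the statement in the excerpt that ``$E_{rs}$ is the primitive idempotent that first appears in $\mathfrak{B}_{rs}$.'' Thus $E_{rs}$ is genuinely a component, which anchors the top of the claimed set.

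The heart of the proof is the combinatorial identity that controls which $E_{is}$ occur, and I expect this to be governed by a recursion or explicit formula for $C_{rs}$ in terms of the $C_{r's}$ with $r' < r$ (same second index $s$) together with idempotents. The key structural fact to establish is that $C_{rs}$ lies in the span of $\{E_{is} : (i,s)\in L,\ i\le r\}$ and of nothing else — i.e., the second coordinate is \emph{preserved}: building $A_{rs}$ from vectors in $W_{rs}$ (weight $r$, multiplicative weight $s$) and forming $A_{rs}\overline{A_{rs}^\top}$ cannot introduce idempotents $E_{ij}$ with $j\ne s$. I would prove this by a direct computation of the entry $\sum_{a\in W_{rs}}(a,x)\overline{(a,y)}$, expanding it via the definition $(a,x)=\prod_{i\in\overline a}\psi_{a_i}(x_i)$ and using the orthogonality relations for the characters $\psi_a$ of $\overline F$. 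The sum over $a\in W_{rs}$ factors according to how the support of $a$ meets $\overline x$ and $\overline y$, and the $s$ ``multiplicative'' positions (where $a_i\ne 0,1$) contribute character sums whose vanishing/nonvanishing is exactly what forces the second index of every surviving idempotent to equal $s$. Concretely, I would show $C_{rs}$ depends on $x,y$ only through $e(x,y)$ and $n(x,y)$ in a way matching the known eigenvalue pattern of $\bigoplus_{i\le r} E_{is}$.

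For the reverse inclusion — that \emph{every} $E_{is}$ with $i\le r$ actually appears — I would use induction on $r$ for fixed $s$. The base case $r=s$ gives $C_{ss}$, whose only possible component with second index $s$ and first index $\le s$ is $E_{ss}$ itself, and positive semidefiniteness plus $C_{ss}\ne 0$ forces $E_{ss}\in\mathfrak{C}(C_{ss})$. For the inductive step, since $\mathfrak{B}_{rs}$ has dimension one more than $\mathfrak{B}_{r-1,s}$ and the new idempotent appearing is $E_{rs}$, I can write $C_{rs} = \alpha E_{rs} + (\text{element of }\mathfrak{B}_{r-1,s})$ with $\alpha > 0$; but I also need the $\mathfrak{B}_{r-1,s}$-part to have \emph{every} $E_{is}$, $i\le r-1$, as a component. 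This should follow either from an explicit triangular relation expressing $C_{rs}$ as a nonnegative combination $\sum_{i\le r}\beta_i E_{is}$ with all $\beta_i > 0$ (which the character-sum computation should yield, with $\beta_i$ essentially a product of Krawtchouk-type or Eberlein-type values known to be positive), or from the fact that $C_{rs}\cdot C_{1,?}$-type products move mass into all lower components.

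The main obstacle I anticipate is the explicit evaluation of the eigenvalues $\beta_i$ of $C_{rs}$ on the eigenspace of $E_{is}$ and verifying they are nonzero for exactly the pairs $(i,s)$ with $i\le r$ and zero otherwise. This requires carefully organizing the character sum $\sum_{a\in W_{rs}}(a,x)\overline{(a,y)}$ — grouping the coordinates of $a$ by their interaction with the supports of $x$ and $y$, applying $\sum_{b\in\overline F}\psi_a(b)\overline{\psi_{a'}(b)} = (q-1)\delta_{aa'}$ type identities in the multiplicative-weight positions, and matching the resulting closed form against the $P$-matrix of $J_q(w,n)$ as recorded in \cite{TAG85}. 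I would lean on the formulas in \cite{TAG85} (in particular whatever is labeled there as the evaluation of $C_{rs}$, near their Equation (23)) to shortcut this, rather than re-deriving the bivariate Krawtchouk/Hahn values from scratch; the positivity of the relevant coefficients is then a consequence of their product form.
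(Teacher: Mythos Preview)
Your plan is workable in principle but takes a substantially more laborious route than the paper. The paper's proof rests on a single imported fact, Lemma~1 of \cite{TAG85}:
\[
C_{rs}C_{kh}=\delta_{sh}\sum_{i=s}^{\min\{r,k\}} a_i\,C_{is},\qquad a_i>0.
\]
Because every $C_{rs}=A_{rs}\overline{A_{rs}^\top}$ is positive semidefinite, its idempotent expansion has nonnegative coefficients, and therefore $\mathfrak{C}(C_{rs}C_{kh})=\mathfrak{C}(C_{rs})\cap\mathfrak{C}(C_{kh})$. Combined with the defining property $E_{kh}\in\mathfrak{C}(C_{kh})$ of the filtration, the product formula lets one read off $\mathfrak{C}(C_{rs})$ directly: $E_{kh}$ belongs if and only if the term $C_{kh}$ shows up on the right-hand side, i.e.\ exactly when $h=s$ and $s\le k\le r$. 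No eigenvalue computation or character-sum expansion is needed.

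Your approach---expanding $\sum_{a\in W_{rs}}(a,x)\overline{(a,y)}$ via character orthogonality, extracting the eigenvalues $\beta_i$, and verifying their (non)vanishing---amounts to re-deriving the content that \cite{TAG85} already packages into the product formula above. It would succeed, but it is the long way around, and you yourself flag the positivity check as the main obstacle. Your passing remark that ``$C_{rs}\cdot C_{1,?}$-type products move mass into all lower components'' is in fact the germ of the paper's argument; pursuing that, and citing the full product formula from \cite{TAG85}, collapses everything to a few lines. One further caution: your inductive step needs not that $C_{r-1,s}$ has all lower $E_{is}$ as components, but that the \emph{residual} $C_{rs}-\alpha E_{rs}$ does---these are different claims, and again the cleanest resolution is the product formula rather than explicit $\beta_i$.
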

\begin{proof}
From Lemma 1 in \cite{TAG85}, for $(r,s),(k,h) \in L$, one has
\begin{equation}\label{eq:C_rs}
C_{rs}C_{kh}=\delta_{sh} \sum_{i=s}^{\min \{r,k\}} a_{i} C_{is},   
\end{equation}
where $a_i >0$ for each $i$. 
Since $C_{rs}$ is a positive semidefinite matrix, 
it can be expressed as a linear combination of primitive idempotents with non-negative coefficients. 
This implies that $\mathfrak{C}(C_{rs}C_{kh}) =\mathfrak{C}(C_{rs})\cap \mathfrak{C}(C_{kh})$. 
Since $E_{kh} \in \mathfrak{C}(C_{kh})$ holds, $C_{kh}$ appears in the expresion \eqref{eq:C_rs} if and only if $E_{kh}\in  \mathfrak{C}(C_{rs})$.  
Therefore, $E_{kh} \not\in \mathfrak{C}(C_{rs})$ for $s\ne h$ or $r<k$, and  $E_{kh} \in \mathfrak{C}(C_{rs})$ for $s=h$ and $s\leq k\leq r$ ($s>k$ is excluded since $(k,s)\in L$).  Thus, $\mathfrak{C}(C_{rs})$ is detemined as stated. 
\end{proof}
Let $X=W_w(n,q)$ and $\phi_Y$ the characteristic vector of $Y \subset X$. 
It is known that $Y \subset X$ is a $T$-design in $J_q(w,n) $ if and only if  
\begin{equation} \label{eq:E_ij}
E_{ij}\phi_Y=\frac{|Y|}{|X|}E_{ij}\phi_X    
\end{equation}
for each $(i,j) \in T$ \cite[Theorem 3.10]{D73}.

Let $N=\{1,2,\ldots, n\}$, $F=\{0,1,\ldots, q-1\}$, and $\overline{F}=F\setminus \{0\}$. 
Let $\mathcal{S}, \mathcal{R}$  be subsets of $N$ such that $\mathcal{S} \subset \mathcal{R}$,  $|\mathcal{R}|=r$ and $|\mathcal{S}|=s$. 
Let $\mathcal{S}=\{i_1,\ldots, i_s\}$.  
For $Y \subset X$ and $\omega=(\omega_1,\ldots, \omega_s) \in \overline{F}^s$, define 
\[
m_{\mathcal{R},\mathcal{S}}(Y,\omega)
=|\{y=(y_i)_{i \in N} \in Y  \mid \mathcal{R} \subset \overline{y}, y_{i_j}=\omega_j\, (j=1,\ldots,s)  \}|, 
\]
where $\overline{y}=\{i \mid y_i\ne 0\}$ is the support of $y\in F^n$. If $s=0$, then 
\[
m_{\mathcal{R}}(Y)=m_{\mathcal{R},\emptyset}(Y,\omega)=|\{y \in Y  \mid \mathcal{R} \subset \overline{y} \}|. 
\]
\begin{theorem}\label{thm:T-design}
Suppose $(0,0)\ne (r,s) \in L$ with $r \leq m=\min\{w,n-w\}$. 
Let 
\begin{align*}
T&=\bigsqcup_{s'\in \{0,1,\ldots,s\}}\{(i,s') \in L \mid  i \leq r\}\setminus \{(0,0)\}\\    
&= \{(i,j) \in L \mid i \leq r, j\leq s\} \setminus \{(0,0)\}. 
\end{align*}
Then, $Y$ is a $T$-design in $J_q(w,n)$ if and only if 
$m_{\mathcal{R},\mathcal{S}}(Y,\omega)$ is constant for all $\omega \in \overline{F}^s$, $\mathcal{R}$, and $\mathcal{S}$ with $\mathcal{S}\subset \mathcal{R}$, $|\mathcal{R}|=r$, and $|\mathcal{S}|=s$.  
\end{theorem}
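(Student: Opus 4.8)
The plan is to pass from the $T$-design condition, stated through the primitive idempotents $E_{ij}$, to an equivalent system of equations for the matrices $C_{rs'}$ with $0\le s'\le s$, and then to read each of those equations combinatorially by means of the discrete Fourier transform over $\overline F^{s'}$. Throughout, put $v=\phi_Y-\frac{|Y|}{|X|}\phi_X$, so that by \eqref{eq:E_ij} the set $Y$ is a $T$-design if and only if $E_{ij}v=0$ for every $(i,j)\in T$; since $E_{00}$ is a scalar multiple of the all-ones matrix, $E_{00}v=0$ holds automatically, so $(0,0)$ may be adjoined to $T$ freely. By Lemma~\ref{lem:component} and the hypothesis $r\le m$, for each $s'\in\{0,\dots,s\}$ the components of $C_{rs'}$ are exactly $E_{s's'},E_{(s'+1)s'},\dots,E_{rs'}$, and they occur with strictly positive coefficients because $C_{rs'}$ is positive semidefinite. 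As the $E_{is'}$ are symmetric orthogonal idempotents, the vectors $E_{is'}v$ are mutually orthogonal, so $C_{rs'}v=\sum_i c_iE_{is'}v=0$ is equivalent to $E_{is'}v=0$ for every $i$ with $s'\le i\le r$. Running $s'$ from $0$ to $s$ and using $r\le m$, the idempotents captured in this way are precisely those $E_{ij}$ with $(i,j)\in L$, $i\le r$, $j\le s$, i.e. the set $T\cup\{(0,0)\}$. Hence $Y$ is a $T$-design if and only if $C_{rs'}v=0$ for all $s'=0,1,\dots,s$.

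The second ingredient interprets each equation $C_{rs'}v=0$. Writing $C_{rs'}=A_{rs'}\overline{A_{rs'}^\top}=B^{*}B$ with $B=\overline{A_{rs'}^\top}$, positive semidefiniteness gives $C_{rs'}v=0\iff \overline{A_{rs'}^\top}v=0$. Now fix $a\in W_{rs'}$ with support $\mathcal R=\overline a$ (of size $r$) and multiplicative support $\mathcal S=\{i\mid a_i\notin\{0,1\}\}$ (of size $s'$), and set $\omega=(a_i)_{i\in\mathcal S}\in(\overline F\setminus\{1\})^{s'}$. The crucial point is that $\psi_1$ is the trivial character of $\overline F$ --- indeed $\psi_1(b)=\psi_b(1)=1$ since $\psi_b$ is a character and $1$ is the identity of $\overline F$ --- so that $(a,x)=\prod_{i\in\mathcal S}\psi_{a_i}(x_i)$ whenever $\mathcal R\subset\overline x$ and $(a,x)=0$ otherwise; in particular $(a,x)$ depends only on the coordinates of $x$ in $\mathcal S$. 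Sorting $x\in Y$ by its tuple of values on $\mathcal S$ then yields
\[
(\overline{A_{rs'}^\top}\phi_Y)_a=\sum_{\nu\in\overline F^{s'}}m_{\mathcal R,\mathcal S}(Y,\nu)\,\overline{\prod_{j=1}^{s'}\psi_{\omega_j}(\nu_j)}\,,
\]
which is the Fourier coefficient of $\nu\mapsto m_{\mathcal R,\mathcal S}(Y,\nu)$ at the character of $\overline F^{s'}$ attached to $\omega$; as $a$ runs over $W_{rs'}$ these are exactly the Fourier coefficients at characters that are nontrivial in every one of the $s'$ coordinates. The identical formula holds with $X$ in place of $Y$, and $m_{\mathcal R,\mathcal S}(X,\nu)$ is independent of $\nu$ (and of $\mathcal R,\mathcal S$) by the symmetry of $X$.

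These two facts close the argument. For the ``only if'' direction, assume $Y$ is a $T$-design and fix $\mathcal R,\mathcal S$ with $|\mathcal R|=r$, $|\mathcal S|=s$. An arbitrary character of $\overline F^s$ is nontrivial on some $s'$-subset $J$ of the coordinates; summing $m_{\mathcal R,\mathcal S}(Y,\cdot)$ over the coordinates outside $J$ turns its Fourier coefficient there into a Fourier coefficient of $m_{\mathcal R,\mathcal S_J}(Y,\cdot)$, for the corresponding $\mathcal S_J\subset\mathcal S$ of size $s'$, at a character nontrivial in all $s'$ coordinates, and the equation $C_{rs'}v=0$ (available since $s'\le s$) forces this to equal $\frac{|Y|}{|X|}$ times the corresponding coefficient for $X$. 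As this holds for every character, Fourier inversion on $\overline F^s$ gives $m_{\mathcal R,\mathcal S}(Y,\nu)=\frac{|Y|}{|X|}m_{\mathcal R,\mathcal S}(X,\nu)$, a constant independent of $\nu,\mathcal R,\mathcal S$. For the ``if'' direction, assume $m_{\mathcal R,\mathcal S}(Y,\nu)$ equals a constant $\lambda$ for all admissible $\nu,\mathcal R,\mathcal S$; double-counting the incidences between $y\in Y$ and triples $(\mathcal R,\mathcal S,\nu)$ with $\mathcal R\subset\overline y$ --- and doing the same for $X$, where the count is automatically constant --- identifies $\lambda=\frac{|Y|}{|X|}m_{\mathcal R,\mathcal S}(X,\nu)$. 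Marginalizing over any $s-s'$ coordinates of $\mathcal S$ then gives $m_{\mathcal R,\mathcal S'}(Y,\cdot)=\frac{|Y|}{|X|}m_{\mathcal R,\mathcal S'}(X,\cdot)$ for all $\mathcal S'\subset\mathcal R$ with $|\mathcal S'|=s'\le s$, so all Fourier coefficients of the two sides agree up to the factor $\frac{|Y|}{|X|}$; hence $\overline{A_{rs'}^\top}v=0$ and therefore $C_{rs'}v=A_{rs'}(\overline{A_{rs'}^\top}v)=0$ for every $s'\le s$, which by the first step means $Y$ is a $T$-design.

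The step I expect to be the main obstacle is the middle one: recognizing that $(a,x)$ is insensitive to the coordinates of $a$ that equal $1$, so that $W_{rs'}$ ``sees'' precisely the characters of $\overline F^{s'}$ nontrivial in every coordinate, and then reconciling this restriction with the need to recover all Fourier coefficients of $m_{\mathcal R,\mathcal S}(Y,\cdot)$ on $\overline F^{s}$ --- which is exactly what forces the index set $T$ to include all lower levels $j=0,1,\dots,s$ rather than only $j=s$, and is the conceptual heart of the characterization. The remaining verifications --- positive semidefiniteness of $C_{rs'}$, the mutual orthogonality of the $E_{is'}v$, Fourier inversion over $\overline F^{s'}$, and the explicit value $\lambda=|Y|\binom{w}{r}/\bigl(\binom{n}{r}(q-1)^{s}\bigr)$ of the index --- are routine.
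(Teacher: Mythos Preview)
Your proof is correct and follows essentially the same route as the paper: reduce the $T$-design condition to $C_{rs'}v=0$ for $0\le s'\le s$ via Lemma~\ref{lem:component}, then to $\overline{A_{rs'}^\top}v=0$, and finally interpret the rows of $A_{rs'}^\top$ as characters of $\overline F^{s'}$ acting on the counts $m_{\mathcal R,\mathcal S}(Y,\cdot)$. The only cosmetic difference is that you phrase the inversion step as Fourier analysis on $\overline F^s$ (reducing a general character to its nontrivial support of size $s'$), whereas the paper assembles the same $(q-1)^s$ equations directly as a nonsingular linear system indexed by $(a_{i_1},\dots,a_{i_s})\in\overline F^s$; these are two names for the same computation.
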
 
\begin{proof}
From Lemma~\ref{lem:component} and \eqref{eq:E_ij}, 
$Y$ is a $T$-design if and only if for each $s' \in \{0,1,\ldots,s\}$, 
\begin{equation} \label{eq:crs}
C_{rs'}\phi_Y=\frac{|Y|}{|X|} C_{rs'} \phi_X.     
\end{equation} 
Note that $(r,s') \in L$ for each $s' \in \{0,1,\ldots, s\}$ with our assumption $r\leq m$. 
    The following are equivalent conditions of \eqref{eq:crs}: 
    \begin{align*}
C_{rs'}(\phi_Y-\frac{|Y|}{|X|} \phi_X)=0 
&\Leftrightarrow A_{rs'}\overline{A_{rs'}^\top}(\phi_Y-\frac{|Y|}{|X|} \phi_X) =0\\
& \Leftrightarrow \overline{A_{rs'}^\top}(\phi_Y-\frac{|Y|}{|X|} \phi_X)=0\\
& \Leftrightarrow A_{rs'}^\top(\phi_Y-\frac{|Y|}{|X|} \phi_X)=0
\\
& \Leftrightarrow A_{rs'}^\top\phi_Y=\frac{|Y|}{|X|}A_{rs'}^\top \phi_X. 
    \end{align*}
 
    For $a \in W_{rs}$, we have $w_H(a)=|\overline{a}|=r$ and $w_M(a)=|\tilde{a}|=s$, where $\tilde{a}=\{i \mid a_i\ne 0,1\}$. 
    Let $\tilde{a}=\{i_1,\ldots, i_s\}$. 
    The $a$-th entry of $A_{rs}^\top \phi_X$ is calculated as follows: 
    \begin{align}
    (A_{rs}^\top \phi_X)(a)&=\sum_{x \in X} (a,x) \nonumber \\
    &=\sum_{x \in X} \prod_{i \in \overline{a}} 
    \psi_{a_i}(x_i)  \nonumber \\
    &= \sum_{x \in X: \overline{a} \subset \overline{x} } \prod_{i \in \overline{a}} \psi_{a_i}(x_i) \label{eq:5} \\
    &= \sum_{x \in X: \overline{a} \subset \overline{x} } \prod_{j =1}^s \psi_{a_{i_j}}(x_{i_j})  \nonumber \\
    &=(q-1)^{w-s} \binom{n-r}{w-r}\sum_{\omega_1,\ldots,\omega_s \in \overline{F}} \prod_{j =1}^s \psi_{a_{i_j}}(\omega_j) \label{eq:4} \\
    &= (q-1)^{w-s} \binom{n-r}{w-r} \prod_{j=1}^s \sum_{\omega \in \overline{F}}\psi_{a_{i_j}}(\omega)  \nonumber \\
    &=\begin{cases}
        (q-1)^{w} \binom{n-r}{w-r} &\text{if $s=0$},  \nonumber\\
        0 &\text{if $s\geq 1$ }. 
    \end{cases}
    \end{align} 
In the equality \eqref{eq:4}, we count the number of terms that take the same value, namely those in  
\[
\{ x \in X \mid \overline{a} \subset \overline{x}, x_{i_j} = \omega_j( j =1,\ldots,s) \}
\]
for given $\omega_i \in \overline{F}$. 
This is equal to 
\[
m_{\mathcal{R},\mathcal{S}}(X,\omega)=(q-1)^{w-s} \binom{n-r}{w-r}, 
\]
where $\mathcal{S} \subset \mathcal{R}$ with $|\mathcal{R}|=r$ and $|\mathcal{S}|=s$. 

(I) We first prove sufficiency.  
Fix $\mathcal{R}, \mathcal{S} \subset \{1,2,\ldots,n\}$ with $\mathcal{S} \subset \mathcal{R}$, $|\mathcal{R}| = r$, and $|\mathcal{S}| = s$.  
From the equality \eqref{eq:5}, for $a \in W_{rs'}$ ($0 \le s' \le s$) with $\overline{a} = \mathcal{R}$ and $\tilde{a} \subset \mathcal{S}=\{i_1,\ldots, i_s\}$, we have
\begin{align*}
    (A_{rs'}^\top \phi_Y)(a)
    &= \sum_{y \in Y:\, \overline{a} \subset \overline{y}} \prod_{i \in \overline{a}} \psi_{a_i}(y_i)\\
    &= \sum_{y \in Y:\, \mathcal{R} \subset \overline{y}} \prod_{j =1}^s \psi_{a_{i_j}}(y_{i_j})\\
    &= \sum_{\omega \in \overline{F}^{s}} m_{\mathcal{R},\mathcal{S}}(Y,\omega) 
       \prod_{j=1}^s \psi_{a_{i_j}}(\omega_j).
\end{align*}
Since $Y$ is a $T$-design, we have
\[
\sum_{\omega \in \overline{F}^{s}} 
   m_{\mathcal{R},\mathcal{S}}(Y,\omega) 
   \prod_{j=1}^s \psi_{a_{i_j}}(\omega_j)
   = (A_{rs'}^\top \phi_Y)(a)
   = \frac{|Y|}{|X|}(A_{rs'}^\top \phi_X)(a)
   = 
   \begin{cases}
      \dfrac{|Y|}{|X|}(q-1)^{w} \binom{n-r}{w-r}, & \text{if } s' = 0,\\[6pt]
      0, & \text{if } s' \ge 1.
   \end{cases}
\]
The number of these equalities is $|\overline{F}^{s}|$, corresponding to the choices of $a_{i_j} \in \overline{F}$ ($j=1,\ldots,s$).  
For the linear equations in the indeterminates $m_{\mathcal{R},\mathcal{S}}(Y,\omega)$ ($\omega \in \overline{F}^s$),  
the coefficient matrix 
\[
\left( \prod_{j=1}^s \psi_{a_{i_j}}(\omega_j) \right)_{(a_{i_1},\ldots,a_{i_s}) \in \overline{F}^s,\, \omega \in \overline{F}^s}
\]
is nonsingular.  
Therefore, the solution is unique, and it is indeed determined by
\[
m_{\mathcal{R},\mathcal{S}}(Y,\omega)
   = \frac{|Y|}{|X|}(q-1)^{w-s} \binom{n-r}{w-r},
\]
as can be seen by comparing with $m_{\mathcal{R},\mathcal{S}}(X,\omega)$.  
This completes the proof of sufficiency.

(II) We prove necessity. 
Assume that $m_{\mathcal{R},\mathcal{S}}(Y,\omega)$ takes a constant value $\lambda_{rs}$. 
Let $\mathcal{S}=\{i_1,\ldots, i_s\}\subset \mathcal{R}$. We have 
\[
 |\{y\in Y \mid  \mathcal{R}\subset \overline{y} \}|
 = \sum_{\omega \in \overline{F}^s}|\{y \in Y \mid  \mathcal{R} \subset \overline{y}, y_{i_j}=\omega_j\ (j=1,\ldots, s)\}|=(q-1)^s \lambda_{rs}. 
\]
By double counting the set $\{(y,\mathcal{R}) \mid  y \in Y, |\mathcal{R}|=r, \mathcal{R} \subset \overline{y}\}$, we obtain 
\[
|Y|\binom{w}{r}
=\sum_{y\in Y} |\{\mathcal{R} \mid \mathcal{R}\subset \overline{y}, |\mathcal{R}|=r  \}|
=\sum_{|\mathcal{R}|=r} |\{y\in Y \mid  \mathcal{R}\subset \overline{y} \}|
=\binom{n}{r}(q-1)^s\lambda_{rs}. 
\]
Together with $|X|=\binom{n}{w}(q-1)^w$, it follows that
\[
\lambda_{rs}=|Y|\frac{\binom{w}{r}}{\binom{n}{r}}(q-1)^{-s}
=\frac{|Y|}{|X|}\frac{\binom{n}{w}\binom{w}{r}}{\binom{n}{r}}(q-1)^{w-s}
=\frac{|Y|}{|X|}\binom{n-r}{w-r}(q-1)^{w-s}. 
\]

Moreover, $m_{\mathcal{R},\mathcal{S}'}(Y,\omega')$ is also constant for $\mathcal{S}'\subset \mathcal{R}$ with $|\mathcal{S}'|=s'\leq s$. 
Indeed, for $\mathcal{S}'=\{i_1,\ldots,i_{s'}\} \subset \mathcal{S}=\{i_1,\ldots, i_s\} \subset \mathcal{R}$, 
\begin{align}
m_{\mathcal{R},\mathcal{S'}}(Y,\omega')&=|\{y \in Y \mid \mathcal{R} \subset \overline{y},y_{i_j}=\omega_{j}'\, (j=1,\ldots,s')\}|\nonumber\\
&=\sum_{\omega \in \overline{F}^s\colon\,  \omega_{j}=\omega_{j}'\, (j=1,\ldots,s')} 
|\{y \in Y \mid \mathcal{R} \subset \overline{y},y_{i_j}=\omega_{j}\, (j=1,\ldots,s)\}|\nonumber\\
&=(q-1)^{s-s'} \lambda_{r,s} \label{eq:(q-1)ss'lambda}\\
&=\frac{|Y|}{|X|} (q-1)^{w-s'}\binom{n-r}{w-r}\nonumber\\
&=\frac{|Y|}{|X|} m_{\mathcal{R},\mathcal{S'}}(X,\omega').\nonumber 
\end{align}

It therefore follows that 
\begin{multline*}
(A_{rs'}^\top \phi_Y)(a)=\sum_{\omega' \in \overline{F}^{s'}} 
   m_{\mathcal{R},\mathcal{S'}}(Y,\omega') 
   \prod_{j=1}^{s'} \psi_{a_{i_j}}(\omega_j')\\
   =\sum_{\omega' \in \overline{F}^{s'}} 
   \frac{|Y|}{|X|}m_{\mathcal{R},\mathcal{S}'}(X,\omega') 
   \prod_{j=1}^{s'} \psi_{a_{i_j}}(\omega_j')=\frac{|Y|}{|X|}(A_{rs'}^\top \phi_X)(a),
    \end{multline*}
    which implies $Y$ is a $T$-design. 
\end{proof}
\section{Combinatorial properties of $(r,s)$-designs} \label{sec:4}

Recalling Definition~\ref{def:rs-design_1}, and taking into account the framework developed in Section~3, we adopt an equivalent formulation expressed in terms of the quantities $m_{\mathcal{R},\mathcal{S}}(Y,\omega)$.
A $v \times n$ array with entries in $F=\{0,1,\ldots,q-1\}$, where each row has exactly $w$ nonzero entries, can be viewed as a set of vectors in $F^{n}$ of weight $w$.
Since $X = W_w(n,q)$ denotes the set of all weight‑$w$ vectors in $F^{n}$, each row corresponds naturally to an element of $X$, and hence the array can be identified with a subset $Y \subset X$.
Under this identification, Definition~\ref{def:rs-design_1} may be rewritten as follows.

\begin{definition}[$(r,s)$-design]\label{def:rs-design}
Let $s$ and $r$ be non-negative integers such that $s \le r \le w$.
A subset $Y \subset X$ is called an $(r,s)$-design if the number $m_{\mathcal{R},\mathcal{S}}(Y,\omega)$ is constant for all $\omega \in \overline{F}^{s}$, and for all pairs of subsets $\mathcal{S} \subset \mathcal{R}\subset N$ with $|\mathcal{S}| = s$ and $|\mathcal{R}| = r$.
When the parameters are specified explicitly, we refer to such a design as an $(r,s)$-$(n, w, q, \lambda)$-design, where $\lambda = m_{\mathcal{R},\mathcal{S}}(Y,\omega)$.
\end{definition}

The notion of an $(r,s)$-design unifies several classical structures.
For example, when $q=2$, an $(r,s)$-$(n, w, 2, \lambda)$-design is equivalent to an $r$-$(n,w,\lambda)$ block design for any $s$.
If the elements of $Y$ are arranged as a $|Y|\times n$ array in the natural way, then the case $w=n$ shows that an $(r,s)$-$(n, n, q, \lambda)$-design corresponds to an orthogonal array $OA(|Y|, n, q-1, s)$ for any $r$.
Note that the total number of elements in $Y$ is determined by the parameters; see Corollary~\ref{cor:|Y|}.
While Definition~\ref{def:rs-design} is convenient for our analytical framework, the relationship with orthogonal arrays is more transparent when expressed in terms of Definition~\ref{def:rs-design_1}.

The concept of an $(r,s)$-design also generalizes the $H$-design introduced by Hanani \cite{E97,Hanani63}.
Let $\mathcal{G} = \{ G_1, G_2, \ldots, G_n \}$ be a collection of $n$ copies of $\overline{F} = \{1,\ldots,q-1\}$, referred to as groups.
A transverse of $\mathcal{G}$ is a subset of $G_1 \sqcup G_2 \sqcup \cdots \sqcup G_n$ that intersects each group in at most one point.
Each vector in $X = W_w(n,q)$ corresponds naturally to a $w$-element transverse of $\mathcal{G}$.
Under this identification, a subset $Y \subset X$ is an $(r,s)$-design precisely when, for every $r$-subset $\mathcal{G}'\subset \mathcal{G}$, each $s$-element transverse of $\mathcal{G}'$ is contained in a constant number of $w$-element transverses in $Y$ that intersects each group of $\mathcal{G}'$.
In particular, an $(r,r)$-$(n,w,q,1)$-design is exactly an $H$-design $H(n,q-1,w,r)$.

\begin{proposition}\label{prop:derived}
Suppose $Y \subset X$ is an $(r,s)$-$(n, w, q, \lambda)$-design.
Let $s'$ and $r'$ be non-negative integer such that $s' \le s$ and $s' \le r' \le r$.
For $\omega \in \overline{F}^{s'}$ and a pair of subsets $\mathcal{S}' \subset \mathcal{R}' \subset N$ with
$|\mathcal{S}'| = s'$, $|\mathcal{R}'| = r'$, define
\[
Y' = \{ (y_i)_{i \in N \setminus \mathcal{R}'} \mid y = (y_i)_{i \in N} \in Y, \mathcal{R}' \subset \overline{y}, y_{i_j} = \omega_j \; (j = 1,2,\ldots,s')\},
\]
where $\mathcal{S}' = \{i_1,i_2,\ldots,i_{s'}\}$, $\omega = (\omega_1, \omega_2, \ldots, \omega_{s'})$.
Then $Y'$ is an $(r-r',s-s')$-$(n-r',w-r',q,\lambda)$-design.
\end{proposition}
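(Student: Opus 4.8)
The plan is to show directly that the counting function $m_{\mathcal{R}'',\mathcal{S}''}(Y',\omega'')$ is constant, by relating it to a counting function for $Y$ over a larger pair of subsets. Fix a pair $\mathcal{S}'' \subset \mathcal{R}'' \subset N \setminus \mathcal{R}'$ with $|\mathcal{S}''| = s - s'$ and $|\mathcal{R}''| = r - r'$, together with $\omega'' \in \overline{F}^{s-s'}$. Writing $\mathcal{S}'' = \{j_1,\ldots,j_{s-s'}\}$, the key observation is the bijection between
\[
\{ z = (z_i)_{i \in N \setminus \mathcal{R}'} \in Y' \mid \mathcal{R}'' \subset \overline{z},\ z_{j_k} = \omega''_k\ (k=1,\ldots,s-s') \}
\]
and
\[
\{ y = (y_i)_{i \in N} \in Y \mid (\mathcal{R}' \cup \mathcal{R}'') \subset \overline{y},\ y_{i_j} = \omega_j,\ y_{j_k} = \omega''_k \},
\]
obtained by restoring the coordinates in $\mathcal{R}'$ (which are forced to be nonzero, with the entries indexed by $\mathcal{S}'$ equal to $\omega$). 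This bijection shows
\[
m_{\mathcal{R}'',\mathcal{S}''}(Y',\omega'') = m_{\mathcal{R}'\cup\mathcal{R}'',\,\mathcal{S}'\cup\mathcal{S}''}(Y,\,\omega\frown\omega''),
\]
where $\mathcal{S}'\cup\mathcal{S}'' \subset \mathcal{R}'\cup\mathcal{R}''$ with $|\mathcal{R}'\cup\mathcal{R}''| = r$ and $|\mathcal{S}'\cup\mathcal{S}''| = s$, and $\omega\frown\omega'' \in \overline{F}^s$ denotes the concatenation.

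Once this identity is in place, the conclusion is immediate: since $Y$ is an $(r,s)$-$(n,w,q,\lambda)$-design, the right-hand side equals $\lambda$ regardless of the choice of $\mathcal{R}'\cup\mathcal{R}''$, of $\mathcal{S}'\cup\mathcal{S}''$, and of the concatenated tuple $\omega\frown\omega''$; in particular it is independent of $\mathcal{R}''$, $\mathcal{S}''$, and $\omega''$. Hence $m_{\mathcal{R}'',\mathcal{S}''}(Y',\omega'')$ is the constant $\lambda$, so $Y'$ is an $(r-r',s-s')$-design with the same index $\lambda$. The ambient parameters are read off directly: each element of $Y'$ is a vector in $F^{N \setminus \mathcal{R}'}$, so the length is $n - r'$; its weight is $w - r'$ because exactly the $r'$ nonzero coordinates indexed by $\mathcal{R}'$ have been deleted from a weight-$w$ vector; and the alphabet size $q$ is unchanged. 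This gives an $(r-r',s-s')$-$(n-r',w-r',q,\lambda)$-design, as claimed. One should also note in passing that the inequalities $s-s' \le r-r' \le w-r'$ needed for the definition to make sense follow from $s' \le r'$, $r' \le r$, $s' \le s$, and $r \le w$.

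\textbf{Main obstacle.} There is no serious analytic difficulty here; the entire content is bookkeeping. The one point that needs care is verifying that the map restoring the deleted coordinates is genuinely a bijection onto the stated set — in particular that the image consists exactly of those $y \in Y$ with $\mathcal{R}' \subset \overline{y}$ and the prescribed values on $\mathcal{S}'$, with no further constraint, and that distinct $z$ come from distinct $y$. This is where one must be attentive to the definition of $Y'$: a vector $z$ lies in $Y'$ precisely when it arises from some $y \in Y$ satisfying $\mathcal{R}' \subset \overline{y}$ and $y_{i_j} = \omega_j$, and because the coordinates in $\mathcal{R}'$ are fully determined on the index set $\mathcal{S}'$ but only constrained to be nonzero outside $\mathcal{S}'$, one must check that the fiber over each $z$ is a single $y$ — which holds because deleting coordinates is injective on the subset of vectors whose $\mathcal{R}'$-coordinates are specified; but in fact the $\mathcal{R}'$-coordinates outside $\mathcal{S}'$ are \emph{not} specified, so strictly speaking we are counting, for each $z \in Y'$, how many $y \in Y$ restrict to it, and the identity above is really a double-counting statement rather than a literal bijection. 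The clean way to handle this is to observe that $Y$ itself may be assumed, without loss of generality, to consist of the vectors \emph{after} fixing the $\mathcal{R}'$-coordinates — or, more simply, to phrase the argument as: summing $m_{\mathcal{R}'\cup\mathcal{R}'',\mathcal{S}'\cup\mathcal{S}''}(Y,\cdot)$ over the free coordinates is not needed at all, since $Y'$ as defined retains each $y$ with its multiplicity. I would resolve this by stating the correspondence at the level of the defining sets with explicit attention to which coordinates are fixed, making the identity $m_{\mathcal{R}'',\mathcal{S}''}(Y',\omega'') = m_{\mathcal{R}'\cup\mathcal{R}'',\mathcal{S}'\cup\mathcal{S}''}(Y,\omega\frown\omega'')$ a direct unwinding of definitions rather than a double count.
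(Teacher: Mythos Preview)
Your approach is correct and essentially identical to the paper's: both establish $m_{\mathcal{R}'',\mathcal{S}''}(Y',\omega'') = m_{\mathcal{R}'\cup\mathcal{R}'',\,\mathcal{S}'\cup\mathcal{S}''}(Y,\omega\frown\omega'') = \lambda$ by directly unwinding the definition of $Y'$, and then read off the ambient parameters. The multiplicity concern you raise is a valid bookkeeping point, but the paper glosses over it in exactly the same way (implicitly treating $Y'$ as a multiset, consistent with the array formulation), so your proposed resolution is precisely what is needed.
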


\begin{proof}
Let $\omega' \in \overline{F}^{s-s'}$, and let $\mathcal{S}'' \subset \mathcal{R}'' \subset N \setminus \mathcal{R}'$ with $|\mathcal{S}''| = s-s'$, $|\mathcal{R}''| = r-r'$.
Write $\mathcal{S}'' = \{i_{s'+1},i_{s'+2},\ldots,i_s\}$ and $\omega' = (\omega_{s'+1},\ldots,\omega_s)$.
Then,
\begin{align*}
m_{\mathcal{R}'',\mathcal{S}''}(Y',\omega') &= |\{y' \in Y' \mid \mathcal{R}'' \subset \overline{y'}, y'_{i_j} = \omega_j \; (j = s'+1,s'+2,\ldots,s)\}| \\
&= |\{y \in Y \mid \mathcal{R}' \cup \mathcal{R}'' \subset \overline{y}, y_{i_j} = \omega_j \; (j = 1,2,\ldots,s)\}| \\
&= \lambda.
\end{align*}
\end{proof}

\begin{proposition}\label{prop:lambda_rs}
Suppose $Y \subset X$ is an $(r,s)$-$(n, w, q, \lambda)$-design.
Let $s'$ and $r'$ be non-negative integers such that $s' \le s$ and $s' \le r' \le r$.
Then the number $m_{\mathcal{R}',\mathcal{S}'}(Y,\omega')$ is constant for all $\omega' \in \overline{F}^{s'}$, and for all pairs of subsets $\mathcal{S}' \subset \mathcal{R}'\subset N$ with $|\mathcal{S}'| = s'$ and $|\mathcal{R}'| = r'$.
Moreover, the constant is given by
\[
\lambda_{r',s'} = (q-1)^{s-s'}\frac{\binom{n-r'}{r-r'}}{\binom{w-r'}{r-r'}} \lambda.
\]
\end{proposition}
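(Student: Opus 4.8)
The plan is to reduce the statement to a double‑counting argument, using Proposition~\ref{prop:derived} to pass to a derived design and then Definition~\ref{def:rs-design} applied to that derived design. Fix a pair $\mathcal{S}' \subset \mathcal{R}' \subset N$ with $|\mathcal{S}'| = s'$, $|\mathcal{R}'| = r'$, and $\omega' \in \overline{F}^{s'}$; write $\mathcal{S}' = \{i_1,\ldots,i_{s'}\}$ and $\omega' = (\omega_1,\ldots,\omega_{s'})$. First I would form the derived set $Y'$ exactly as in Proposition~\ref{prop:derived}, so that $Y'$ is an $(r-r',s-s')$-$(n-r',w-r',q,\lambda)$-design on the ground set $N \setminus \mathcal{R}'$. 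The quantity $m_{\mathcal{R}',\mathcal{S}'}(Y,\omega')$ is, by definition, $|Y'|$ together with the observation that a vector $y \in Y$ with $\mathcal{R}' \subset \overline{y}$ and $y_{i_j} = \omega_j$ restricts to a weight‑$(w-r')$ vector on $N \setminus \mathcal{R}'$; this identification is exactly the one recorded in the proof of Proposition~\ref{prop:derived}. So the problem becomes: compute $|Y'|$ for an $(r-r',s-s')$-design with the given parameters.

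**The next step** is the cardinality count. For an $(\rho,\sigma)$-$(\nu,\omega,q,\lambda)$-design $Z$ one double-counts the set $\{(z,\mathcal{R}) \mid z \in Z,\ \mathcal{R} \subset \overline{z},\ |\mathcal{R}| = \rho\}$, splitting each choice of $\mathcal{R}$ further over the $(q-1)^{\sigma}$ assignments on a fixed $\sigma$-subset $\mathcal{S} \subset \mathcal{R}$; each such refined count is $\lambda$ by definition, giving
\[
|Z|\binom{\omega}{\rho} = \binom{\nu}{\rho}(q-1)^{\sigma}\lambda,
\]
which is precisely the computation already carried out in part (II) of the proof of Theorem~\ref{thm:T-design}. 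Applying this with $(\nu,\omega,\rho,\sigma) = (n-r',\,w-r',\,r-r',\,s-s')$ yields
\[
|Y'| = \frac{\binom{n-r'}{r-r'}}{\binom{w-r'}{r-r'}}(q-1)^{s-s'}\lambda,
\]
which is the asserted value of $\lambda_{r',s'}$. The fact that $m_{\mathcal{R}',\mathcal{S}'}(Y,\omega') = |Y'|$ is independent of the choice of $\mathcal{R}'$, $\mathcal{S}'$, and $\omega'$ then follows because $Y'$ has parameters depending only on $r'$ and $s'$; more carefully, one should note that $|Y'|$ is constant across choices because the derived-design property in Proposition~\ref{prop:derived} holds for \emph{every} valid choice of the contracted coordinates, so $|Y'|$ is an invariant of $(r',s')$ alone.

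**Alternatively**, and perhaps more cleanly for the write-up, I would avoid invoking Proposition~\ref{prop:derived} and argue directly by a two-stage double count inside $Y$ itself. Starting from the defining constant $\lambda = m_{\mathcal{R},\mathcal{S}}(Y,\omega)$, one first sums over the $(q-1)^{s-s'}$ extensions of $\omega'$ to get $m_{\mathcal{R},\mathcal{S}'}(Y,\omega') = (q-1)^{s-s'}\lambda$ for any $\mathcal{R}$ of size $r$ containing $\mathcal{S}'$ — this is equation~\eqref{eq:(q-1)ss'lambda} verbatim. Then one double-counts $\{(y,\mathcal{R}) \mid y \in Y,\ \mathcal{R}' \subset \mathcal{R} \subset \overline{y},\ |\mathcal{R}| = r,\ y_{i_j}=\omega_j'\ (j\le s')\}$: fixing $y$ first gives a factor $\binom{w-r'}{r-r'}$ times $m_{\mathcal{R}',\mathcal{S}'}(Y,\omega')$, while fixing $\mathcal{R}$ first gives $\binom{n-r'}{r-r'}$ times $(q-1)^{s-s'}\lambda$, and solving produces the stated formula. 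I expect no genuine obstacle here; the only point requiring mild care is the bookkeeping of which subsets are required to lie in the support versus which coordinates are pinned to prescribed nonzero values, together with checking the binomial identities $\binom{n}{r}\binom{n-r'}{r-r'}^{-1}$-type rearrangements — all routine, and consistent with the $s'=0$, $r'=0$ specialization recovering $\lambda_{r,s} = \lambda$ and with the cardinality formula of Corollary~\ref{cor:|Y|}.
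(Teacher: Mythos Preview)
Your ``alternative'' direct double-count is precisely the paper's proof: first reduce from $s$ to $s'$ via \eqref{eq:(q-1)ss'lambda}, then double-count the pairs $(y,\mathcal{R})$ with $y\in Y$, $y_{i_j}=\omega_j'$ for $j\le s'$, and $\mathcal{R}'\subset\mathcal{R}\subset\overline{y}$, $|\mathcal{R}|=r$, to extract the ratio $\binom{n-r'}{r-r'}/\binom{w-r'}{r-r'}$. Your first route through Proposition~\ref{prop:derived} followed by a cardinality count is an equally valid reorganization of the same ingredients.
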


\begin{proof}
Write $\mathcal{S}' = \{i_1,i_2,\ldots,i_{s'}\}$ and $\omega' = (\omega'_1,\ldots,\omega'_{s'})$.
We count the number of elements in the following set in two different ways.
\begin{equation}\label{set}
\{ (y,\mathcal{R}) \mid y \in Y, y_{i_j} = \omega'_j \; (j = 1,2,\ldots,s'), |\mathcal{R}| = r, \mathcal{R}' \subset \mathcal{R} \subset \overline{y} \}
\end{equation}

First, there are $\binom{n-r'}{r-r'}$ ways to choose $\mathcal{R}$.
For a given $\mathcal{R}$,
take a subset $\mathcal{S}' \subset \mathcal{S} \subset \mathcal{R}$
with $|\mathcal{S}| = s$.
The computation of 
$|\{ y \in Y \mid \mathcal{R} \subset \overline{y},y_{i_j} = \omega'_j \;(j=1,\ldots,s') \}|$
is identical to the one appearing in \eqref{eq:(q-1)ss'lambda}; 
hence, by \eqref{eq:(q-1)ss'lambda}, the number of choices of $y$ is $(q-1)^{s-s'} \lambda$.
Therefore, the number of elements in the set \eqref{set} is $(q-1)^{s-s'}\binom{n-r'}{r-r'} \lambda$.

Conversely, there are $m_{\mathcal{R}',\mathcal{S}'}(Y,\omega')$ ways to choose $y$.
For a given $y$, there are $\binom{w-r'}{r-r'}$ ways to choose $\mathcal{R}$.
Therefore, the number of elements in the set \eqref{set} is $\binom{w-r'}{r-r'}m_{\mathcal{R}',\mathcal{S}'}(Y,\omega')$.

Equating both counts, we obtain $m_{\mathcal{R}',\mathcal{S}'}(Y,\omega') = \lambda_{r',s'}$.
This completes the proof.
\end{proof}

\begin{corollary}\label{cor:|Y|}
Suppose $Y \subset X$ is an $(r,s)$-$(n, w, q, \lambda)$-design.
Then the cardinality of $Y$ is given by
\[
|Y| =  (q-1)^{s} \frac{\binom{n}{r}}{\binom{w}{r}} \lambda.
\]
\end{corollary}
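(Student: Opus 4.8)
The plan is to derive this as the special case $r'=s'=0$ of Proposition~\ref{prop:lambda_rs}. First I would check that the hypotheses of that proposition are met with this choice: we need $s'\le s$ and $s'\le r'\le r$, which here read $0\le s$ and $0\le 0\le r$, both trivially true since $Y$ is an $(r,s)$-design with $s\le r$. So Proposition~\ref{prop:lambda_rs} applies and tells us that $m_{\mathcal{R}',\mathcal{S}'}(Y,\omega')$ is constant when $|\mathcal{R}'|=|\mathcal{S}'|=0$.

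Next I would identify that constant with $|Y|$. When $r'=s'=0$ we have $\mathcal{R}'=\mathcal{S}'=\emptyset$ and $\omega'$ is the empty tuple, so by the convention fixed just before Theorem~\ref{thm:T-design},
\[
m_{\emptyset,\emptyset}(Y,\omega')=m_{\emptyset}(Y)=|\{y\in Y \mid \emptyset\subset \overline{y}\}|=|Y|.
\]
On the other hand, the formula in Proposition~\ref{prop:lambda_rs} gives
\[
\lambda_{0,0}=(q-1)^{s-0}\,\frac{\binom{n-0}{r-0}}{\binom{w-0}{r-0}}\,\lambda
=(q-1)^{s}\,\frac{\binom{n}{r}}{\binom{w}{r}}\,\lambda.
\]
Equating the two expressions for $\lambda_{0,0}$ yields the claimed value of $|Y|$.

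There is essentially no obstacle here; the only thing to be careful about is that the degenerate index values $r'=s'=0$ are genuinely admissible in Proposition~\ref{prop:lambda_rs} and that the empty-support conventions are the ones stated in Section~\ref{sec:3}. Alternatively, if one prefers a self-contained argument, the same identity drops out of the double counting of $\{(y,\mathcal{R})\mid y\in Y,\ |\mathcal{R}|=r,\ \mathcal{R}\subset\overline{y}\}$ exactly as in part~(II) of the proof of Theorem~\ref{thm:T-design}: counting by $y$ first gives $|Y|\binom{w}{r}$, counting by $\mathcal{R}$ first gives $\binom{n}{r}(q-1)^{s}\lambda$ (using that each fixed $\mathcal{R}$ lies in the support of $(q-1)^{s}\lambda$ elements of $Y$, which is the $s'=0$ instance of the relation already recorded in the theorem's proof), and rearranging gives the formula.
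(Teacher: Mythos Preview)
Your proposal is correct and matches the paper's own proof exactly: the paper simply observes that $|Y|=\lambda_{0,0}$ by applying Proposition~\ref{prop:lambda_rs} with $r'=s'=0$. Your additional remarks about checking the degenerate case and the alternative double-counting argument are accurate but not needed.
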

\begin{proof}
By Proposition~\ref{prop:lambda_rs}, we have $|Y| = \lambda_{0,0} = (q-1)^{s} \frac{\binom{n}{r}}{\binom{w}{r}} \lambda$.
\end{proof}

\begin{proposition}\label{prop:RST}
Suppose $Y \subset X$ is an $(r,s)$-$(n, w, q, \lambda)$-design.
Let $s'$, $r'$ and $t$ be non-negative integers such that $s' \le s$, $s' \le r' \le r$ and $t \le r-r'$.
For every $\omega' \in \overline{F}^{s'}$, and for all subsets $\mathcal{S}' \subset \mathcal{R}'\subset N$ and $\mathcal{T} \subset N$ with $|\mathcal{S}'| = s'$, $|\mathcal{R}'| = r'$, $|\mathcal{T}| = t$ and $\mathcal{R}' \cap \mathcal{T} = \emptyset$, we have
\[
|\{y \in Y \mid \mathcal{R}' \subset \overline{y}, y_{i_j} = \omega'_j \;(j = 1,2,\ldots,s'), \mathcal{T} \cap \overline{y} = \emptyset\}|
=
(q-1)^ {s-s'} \frac{\binom{n-r'-t}{w-r'}}{\binom{n-r}{w-r}} \lambda,
\]
where $\mathcal{S}' = \{i_1,i_2,\ldots,i_{s'}\}$ and $\omega' = (\omega'_1, \omega'_2, \ldots, \omega'_{s'})$.
\end{proposition}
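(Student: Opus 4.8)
The proposition says that, in an $(r,s)$-design $Y$, the number of rows whose support avoids a prescribed $t$-set $\mathcal{T}$, while containing a prescribed $r'$-set $\mathcal{R}'$ disjoint from $\mathcal{T}$ and taking prescribed values $\omega'$ on a prescribed $s'$-subset $\mathcal{S}'\subseteq\mathcal{R}'$, is a constant depending only on the parameters. This is the $(r,s)$-design analogue of the elementary fact that in a block $t$-design the number of blocks missing a given set of points is determined. The plan is to deduce it from Proposition~\ref{prop:lambda_rs} by inclusion--exclusion over $\mathcal{T}$, followed by routine binomial-coefficient bookkeeping.

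First I would fix $\mathcal{R}'$, $\mathcal{S}'=\{i_1,\dots,i_{s'}\}$, $\omega'$ and $\mathcal{T}$, and denote the left-hand side by $\nu$. Rewriting the condition $\mathcal{T}\cap\overline{y}=\emptyset$ by inclusion--exclusion over the subsets $\mathcal{U}\subseteq\mathcal{T}$ gives
\[
\nu=\sum_{\mathcal{U}\subseteq\mathcal{T}}(-1)^{|\mathcal{U}|}\bigl|\{y\in Y\mid \mathcal{R}'\cup\mathcal{U}\subseteq\overline{y},\ y_{i_j}=\omega'_j\ (j=1,\dots,s')\}\bigr|
=\sum_{\mathcal{U}\subseteq\mathcal{T}}(-1)^{|\mathcal{U}|}\,m_{\mathcal{R}'\cup\mathcal{U},\,\mathcal{S}'}(Y,\omega').
\]
Since $\mathcal{R}'\cap\mathcal{T}=\emptyset$ we have $|\mathcal{R}'\cup\mathcal{U}|=r'+|\mathcal{U}|$, and because $\mathcal{S}'\subseteq\mathcal{R}'\cup\mathcal{U}$, $s'\le s$, $s'\le r'\le r'+|\mathcal{U}|$ and $r'+|\mathcal{U}|\le r'+t\le r$, Proposition~\ref{prop:lambda_rs} applies and yields $m_{\mathcal{R}'\cup\mathcal{U},\mathcal{S}'}(Y,\omega')=\lambda_{r'+|\mathcal{U}|,\,s'}$. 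Grouping the $\binom{t}{k}$ sets $\mathcal{U}$ of each cardinality $k$, this reduces the claim to evaluating $\nu=\sum_{k=0}^{t}(-1)^k\binom{t}{k}\,\lambda_{r'+k,\,s'}$.

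Next I would substitute the explicit value of $\lambda_{r'+k,s'}$ supplied by Proposition~\ref{prop:lambda_rs}. A short factorial manipulation gives the identity $\binom{n-r'-k}{r-r'-k}\big/\binom{w-r'-k}{r-r'-k}=\binom{n-r'-k}{\,n-w\,}\big/\binom{n-r}{\,w-r\,}$, hence
\[
\nu=\frac{(q-1)^{s-s'}\lambda}{\binom{n-r}{w-r}}\sum_{k=0}^{t}(-1)^k\binom{t}{k}\binom{n-r'-k}{\,n-w\,}.
\]
It then remains to prove the alternating-sum identity $\sum_{k=0}^{t}(-1)^k\binom{t}{k}\binom{M-k}{\,j\,}=\binom{M-t}{\,j-t\,}$ with $M=n-r'$ and $j=n-w$; this follows from the generating-function identity $\sum_{k}\binom{t}{k}(-1)^k(1+x)^{-k}=(x/(1+x))^{t}$ (so that $\sum_k(-1)^k\binom{t}{k}\binom{M-k}{j}$ is the coefficient of $x^{j}$ in $x^{t}(1+x)^{M-t}$), or by induction on $t$ using Pascal's rule. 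Since $\binom{n-r'-t}{\,n-w-t\,}=\binom{n-r'-t}{\,w-r'\,}$, this gives $\nu=(q-1)^{s-s'}\lambda\binom{n-r'-t}{w-r'}\big/\binom{n-r}{w-r}$, as claimed.

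I do not anticipate a genuine obstacle here: once Proposition~\ref{prop:lambda_rs} is available the argument is purely mechanical. The only points needing attention are (i) checking that the parameter inequalities required to invoke Proposition~\ref{prop:lambda_rs} hold for \emph{every} $\mathcal{U}\subseteq\mathcal{T}$ --- this is exactly where the hypotheses $s'\le r'$ and $t\le r-r'$ enter --- and (ii) the two elementary binomial identities above (noting that the endpoint conventions take care of the degenerate cases $t>n-w$ and $w=n$, where both sides vanish for the obvious reason). As an alternative to inclusion--exclusion one may argue by induction on $t$: writing $\nu(\mathcal{R}',\mathcal{T})$ for the count with $\mathcal{S}'$ and $\omega'$ fixed, choosing $c\in\mathcal{T}$ and splitting the rows counted by $\nu(\mathcal{R}',\mathcal{T}\setminus\{c\})$ according to whether $c\in\overline{y}$ yields the recursion $\nu(\mathcal{R}',\mathcal{T})=\nu(\mathcal{R}',\mathcal{T}\setminus\{c\})-\nu(\mathcal{R}'\cup\{c\},\mathcal{T}\setminus\{c\})$, whose base case $t=0$ is Proposition~\ref{prop:lambda_rs} and whose inductive step collapses to a single application of Pascal's rule.
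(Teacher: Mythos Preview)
Your argument is correct. Both your proof and the paper's begin with inclusion--exclusion over the subsets of $\mathcal{T}$, but from there the routes diverge. The paper first reduces to the special case $r'=s'=0$; there, inclusion--exclusion is used only to show that $\mu_t(\mathcal{T})=\sum_{t'=0}^{t}(-1)^{t'}\binom{t}{t'}\lambda_{t',0}$ is independent of $\mathcal{T}$, and the actual value of $\mu_t$ is obtained by a separate double count of the pairs $(\mathcal{T},y)$ with $|\mathcal{T}|=t$ and $\mathcal{T}\cap\overline{y}=\emptyset$, yielding $\mu_t=|Y|\binom{n-w}{t}/\binom{n}{t}$ without ever evaluating the alternating sum. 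The general case is then deduced by passing to the derived design of Proposition~\ref{prop:derived}. Your approach, by contrast, stays in the general case throughout, applies Proposition~\ref{prop:lambda_rs} directly to each $m_{\mathcal{R}'\cup\mathcal{U},\mathcal{S}'}(Y,\omega')$, and finishes by evaluating the resulting alternating sum via the identity $\sum_{k}(-1)^k\binom{t}{k}\binom{M-k}{j}=\binom{M-t}{j-t}$. Your route is more uniform and self-contained (it does not need Proposition~\ref{prop:derived}); the paper's averaging trick sidesteps the binomial identity entirely.
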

\begin{proof}
We first consider the case $r' = s' = 0$.
Define
\[
\mu_t(\mathcal{T}) = |\{y \in Y \mid \mathcal{T} \cap \overline{y} = \emptyset\}|.
\]
For each $i \in N$, let
\[
Y_i = \{ y \in Y \mid i \in \overline{y}\}.
\]
For a subset $\mathcal{T}' \subset \mathcal{T}$ with $|\mathcal{T}'| = t'$,
Proposition~\ref{prop:lambda_rs} gives
\[
\left| \bigcap_{i \in \mathcal{T}'} Y_i \right| = |\{ y \in Y \mid \mathcal{T}' \subset \overline{y}\}| = \lambda_{t', 0}.
\]
By the inclusion-exclusion principle, we have
\[
\mu_t(\mathcal{T})
= \left| Y \setminus \left( \bigcup_{i \in \mathcal{T}} Y_i \right)\right| 
= \sum_{\mathcal{T}' \subset \mathcal{T}} (-1)^{|\mathcal{T}'|} \left| \bigcap_{i \in \mathcal{T}'} Y_i \right| 
= \sum_{t' = 0}^t (-1)^{t'} \binom{t}{t'} \lambda_{t',0}.
\]
Thus, $\mu_t(\mathcal{T})$ is independent of the choice of $\mathcal{T}$.
Denote this constant by $\mu_t$, and consider the set
\[
\{ (\mathcal{T}, y) \mid |\mathcal{T}| = t, y \in Y, \mathcal{T} \cap \overline{y} = \emptyset\}.
\]
We count the number of elements in this set in two different ways:

First, there are $|Y|$ choices for $y$ and for each $y$, $\binom{n-w}{t}$ choices for $\mathcal{T}$.
So the total is $|Y|\binom{n-w}{t}$.
Second, there are $\binom{n}{t}$ choices for $\mathcal{T}$ and for each $\mathcal{T}$, $\mu_t$ choices for $y$.
So the total is $\mu_t\binom{n}{t}$.
Equating both counts and applying Corollary~\ref{cor:|Y|}, we obtain 
\[
\mu_t = \frac{\binom{n-w}{t}}{\binom{n}{t}} |Y| = (q-1)^{s} \frac{\binom{n-w}{t}}{\binom{n}{t}} \frac{\binom{n}{r}}{\binom{w}{r}} \lambda
=
(q-1)^s \frac{\binom{n-t}{w}}{\binom{n-r}{w-r}} \lambda.
\]
This completes the proof for the case $r' = s' = 0$.

We now consider the case $(r',s') \neq (0,0)$.
This follows by applying the result for $r' = s' = 0$ to the derived design
\[
Y' = \{ (y_i)_{i \in N \setminus \mathcal{R}'} \mid y = (y_i)_{i \in N} \in Y, \mathcal{R}' \subset \overline{y}, y_{i_j} = \omega'_j \; (j = 1,2,\ldots,s')\},
\]
which is an $(r-r',s-s')$-$(n-r',w-r',q,\lambda)$-design by Proposition~\ref{prop:derived}.
\end{proof}
\begin{proposition}\label{prop:reduction}
Suppose $Y \subset X$ is an $(r,s)$-$(n, w, q, \lambda)$-design.
If $r-s \ge n-w$, then $Y$ can be regarded as a $(w,s)$-$(n,w,q,\lambda')$-design, where 
\[
\lambda' = \frac{1}{\binom{n-r}{w-r}} \lambda.
\]
\end{proposition}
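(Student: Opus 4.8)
The plan is to read off from Definition~\ref{def:rs-design} that asserting ``$Y$ is a $(w,s)$-$(n,w,q,\lambda')$-design'' amounts to showing that $m_{\mathcal{R},\mathcal{S}}(Y,\omega)$ equals one fixed value $\lambda'$ for every $\mathcal{S}\subset\mathcal{R}\subset N$ with $|\mathcal{S}|=s$, $|\mathcal{R}|=w$, and every $\omega\in\overline{F}^{s}$, and then to evaluate each such count by reducing it to the situation handled by Proposition~\ref{prop:RST}. The elementary fact that makes this work is that, since every $y\in Y$ has Hamming weight $w$, the containment $\mathcal{R}\subset\overline{y}$ with $|\mathcal{R}|=w$ forces $\overline{y}=\mathcal{R}$.

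Concretely, I would fix $\mathcal{R}$ with $|\mathcal{R}|=w$, a subset $\mathcal{S}=\{i_{1},\dots,i_{s}\}\subset\mathcal{R}$, and $\omega=(\omega_{1},\dots,\omega_{s})\in\overline{F}^{s}$, and put $\mathcal{T}=N\setminus\mathcal{R}$, so that $|\mathcal{T}|=n-w$ and $\mathcal{S}\cap\mathcal{T}=\emptyset$. By the observation above, a vector $y\in Y$ satisfies $\mathcal{R}\subset\overline{y}$ if and only if it satisfies both $\mathcal{S}\subset\overline{y}$ and $\overline{y}\cap\mathcal{T}=\emptyset$, since each side is equivalent to $\overline{y}=\mathcal{R}$. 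Hence
\[
m_{\mathcal{R},\mathcal{S}}(Y,\omega)=\bigl|\{\,y\in Y \mid \mathcal{S}\subset\overline{y},\ y_{i_{j}}=\omega_{j}\ (j=1,\dots,s),\ \mathcal{T}\cap\overline{y}=\emptyset\,\}\bigr|.
\]
Now I would apply Proposition~\ref{prop:RST} with $r'=s'=s$, $\mathcal{R}'=\mathcal{S}'=\mathcal{S}$, $\omega'=\omega$, and $t=n-w$: the hypotheses $s'\le s$, $s'\le r'\le r$, and $t\le r-r'$ hold, the last one being exactly $n-w\le r-s$, i.e.\ the assumption $r-s\ge n-w$, and $\mathcal{R}'\cap\mathcal{T}=\mathcal{S}\cap\mathcal{T}=\emptyset$. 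Since $s'=s$, the factor $(q-1)^{s-s'}$ in the conclusion of Proposition~\ref{prop:RST} is $1$, and it gives
\[
m_{\mathcal{R},\mathcal{S}}(Y,\omega)=\frac{\binom{n-s-(n-w)}{w-s}}{\binom{n-r}{w-r}}\,\lambda=\frac{\binom{w-s}{w-s}}{\binom{n-r}{w-r}}\,\lambda=\frac{\lambda}{\binom{n-r}{w-r}}=\lambda',
\]
a value independent of $\mathcal{R}$, $\mathcal{S}$, $\omega$. This is precisely the assertion that $Y$ is a $(w,s)$-$(n,w,q,\lambda')$-design.

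The only point needing care is the parameter bookkeeping: one must check that $r-s\ge n-w$ is exactly the inequality ensuring that $\mathcal{T}=N\setminus\mathcal{R}$ is an admissible excluded set for Proposition~\ref{prop:RST} with $r'=s$ (so that $t=n-w\le r-r'=r-s$), and that the numerator binomial coefficient then collapses, $\binom{n-s-(n-w)}{w-s}=\binom{w-s}{w-s}=1$, leaving the stated $\lambda'$. Everything else is immediate from the weight constraint on $Y$. One could equally take any $r'$ with $s\le r'\le r-(n-w)$ together with $\mathcal{S}\subset\mathcal{R}'\subset\mathcal{R}$; the computation yields the same $\lambda'$, so $r'=s$ is the most economical choice.
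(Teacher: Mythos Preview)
Your proof is correct and essentially identical to the paper's own argument: both reduce $m_{\mathcal{R},\mathcal{S}}(Y,\omega)$ with $|\mathcal{R}|=w$ to the count in Proposition~\ref{prop:RST} via the observation that $\mathcal{R}\subset\overline{y}$ forces $\overline{y}=\mathcal{R}$, and then apply that proposition with $\mathcal{R}'=\mathcal{S}'=\mathcal{S}$, $t=n-w$, so that the hypothesis $r-s\ge n-w$ is exactly the condition $t\le r-r'$ and the binomial in the numerator collapses to $1$. The only cosmetic difference is that the paper records the intermediate step $\{\mathcal{R}\subset\overline{y}\}=\{\mathcal{R}=\overline{y}\}$ explicitly before passing to the form with $\mathcal{T}=N\setminus\mathcal{R}$.
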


\begin{proof}
Let $\omega \in \overline{F}^{s}$ and let $\mathcal{S} \subset \mathcal{R} \subset N$ with $|\mathcal{S}| = s$, $|\mathcal{R}| = w$.
Set $\mathcal{T} = N \setminus \mathcal{R}$
and write $\mathcal{S} = \{i_1,\ldots,i_s\}$ and $\omega = (\omega_1,\ldots,\omega_s)$.
By Proposition~\ref{prop:RST}, it follows that
\begin{align*}
m_{\mathcal{R},\mathcal{S}}(Y,\omega) &= |\{y \in Y \mid \mathcal{R} \subset \overline{y}, y_{i_j} = \omega_j \; (j = 1,2,\ldots,s)\}| \\
&= |\{y \in Y \mid \mathcal{R} = \overline{y}, y_{i_j} = \omega_j \; (j = 1,2,\ldots,s)\}| \\
&= |\{y \in Y \mid \mathcal{S} \subset \overline{y}, y_{i_j} = \omega_j \; (j = 1,2,\ldots,s), \mathcal{T} \cap \overline{y} = \emptyset\}| \\
&= \frac{1}{\binom{n-r}{w-r}} \lambda
\end{align*}
This completes the proof.
\end{proof}

According to Proposition~\ref{prop:reduction}, in an $(r,s)$-$(n, w, q, \lambda)$-design, the case $r-s \ge n-w$ is equivalent to the case $r = w$.
Therefore, it suffices to consider the case $r-s \le n-w$.
In other words, we may assume $(r,s) \in L$.

\section{Lower bounds on $(r,s)$-designs} \label{sec:5}
We recall two families of orthogonal polynomials that play a central role in the study of nonbinary Johnson schemes.
For integers $0 \le i \le n$, the Krawtchouk polynomial $K_i \in \mathbb{R}[x]$ is defined by
\[
K_i(n,q,x) = \sum_{l=0}^i (-1)^l (q-1)^{i-l}\binom{n-x}{i-l}\binom{x}{l}.
\]
Similarly, for $0 \le i \le m = \min\{w,n-w\}$, the Hahn polynomial $Q_i \in \mathbb{R}[x]$ is given by
\[
Q_i(n,w,x) = \frac{\binom{n}{i} - \binom{n}{i-1}}{\binom{w}{x}\binom{n-w}{x}}\sum_{r=0}^x (-1)^r \binom{i}{r}\binom{w-i}{x-r}\binom{n-w-i}{x-r}.
\]
Both $K_i$ and $Q_i$ are polynomials of degree $i$.

Let
\[
K = \{(k,h) \mid 0 \le h \le k \le w,\; h \le m\}.
\]
The second eigenmatrix of the nonbinary Johnson scheme $J_q(w, n)$ can be expressed in terms of these polynomials as
\begin{equation}\label{Q}
Q_{i,j}(k,h) = \frac{\binom{n}{j}}{\binom{w}{j}}K_j(w-h,q-1,k-h)Q_{i-j}(n-j,w-j,h)
\end{equation}
for $(i,j) \in L$ and $(k,h) \in K$.
This formula is due to the result in \cite[Theorem~2]{TAG85}.
In particular, $Q_{i,j}(k,h)$ is of degree $j$ in $k$ and of degree $i$ in $h$.
The multiplicity of $J_q(w, n)$ is given by
\begin{equation}\label{m}
m_{i,j} = Q_{i,j}(0,0) = (q-2)^j \binom{n}{j} \left(\binom{n-j}{i-j} - \binom{n-j}{i-j-1}\right)
\end{equation}
for $(i,j) \in L$.
The same expression also appears in the course of the proof in \cite[Lemma~2.3]{Z21}.

\begin{lemma}\label{lem:vanish}
For $(i,j), (i',j'), (r,s) \in L$,
if $i+i' < r$ or $j+j' < s$, then the Krein number of the nonbinary Johnson scheme $J_q(w, n)$ satisfies $q_{ij,i'j'}^{rs} = 0$.
\end{lemma}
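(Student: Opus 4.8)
The plan is to reduce the vanishing of the Krein number $q_{ij,i'j'}^{rs}$ to a statement about the degrees of the eigenvalue functions $Q_{i,j}(k,h)$. Recall that the Krein numbers are defined by
\[
E_{ij}\circ E_{i'j'}=\frac{1}{|X|}\sum_{(r,s)\in L}q_{ij,i'j'}^{rs}E_{rs},
\]
where $\circ$ is the entrywise product, and that they can be computed from the second eigenmatrix $Q$ via the standard formula
\[
q_{ij,i'j'}^{rs}=\frac{m_{ij}\,m_{i'j'}}{|X|}\sum_{(k,h)\in K}\frac{1}{v_{kh}}\,Q_{i,j}(k,h)\,Q_{i',j'}(k,h)\,\overline{Q_{r,s}(k,h)},
\]
where $v_{kh}$ is the valency of the relation class indexed by $(k,h)$ and the sum runs over the (ordinary) eigenspaces. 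The key structural input, already recorded in the excerpt after \eqref{Q}, is that $Q_{i,j}(k,h)$ is a polynomial of degree $j$ in the variable $k$ and of degree $i$ in the variable $h$; moreover the leading behaviour is ``triangular'' in the bivariate sense, so that $Q_{i,j}$ lies in the span of the monomials $k^b h^a$ with $a\le i$, $b\le j$, and this family is a basis dual (with respect to the inner product $\langle f,g\rangle=\frac{1}{|X|}\sum_{kh}v_{kh}f(k,h)\overline{g(k,h)}$) to the $\{E_{rs}\}$.

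First I would set up this inner-product framework explicitly: the functions $\{Q_{r,s}\}_{(r,s)\in L}$ form an orthogonal basis for the space $\mathcal{F}$ of functions on the set of eigenvalues, and the Krein number $q_{ij,i'j'}^{rs}$ is, up to a positive constant, the coefficient of $Q_{r,s}$ in the expansion of the pointwise product $Q_{i,j}\cdot Q_{i',j'}$ in this basis. Next I would observe that $Q_{i,j}\cdot Q_{i',j'}$, as a polynomial in $(k,h)$, has degree at most $j+j'$ in $k$ and at most $i+i'$ in $h$; hence it lies in the span of $\{Q_{k,h}: k\le i+i',\ h\le j+j'\}$ by the triangularity of the basis. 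Therefore the coefficient on $Q_{r,s}$ vanishes whenever $(r,s)$ fails to satisfy $r\le i+i'$ and $s\le j+j'$ — that is, whenever $i+i'<r$ or $j+j'<s$, which is exactly the claim.

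The main obstacle I anticipate is pinning down the precise bigraded/triangular structure of the basis $\{Q_{r,s}\}$ relative to the monomials $k^b h^a$: one needs to know not just that $Q_{r,s}$ has the stated individual degrees, but that the change-of-basis matrix between $\{Q_{r,s}\}_{(r,s)\in L}$ and a suitable monomial family $\{k^b h^a\}$ is triangular with respect to the componentwise partial order on $L$, so that ``degree $\le(i+i',j+j')$ in $(h,k)$'' genuinely forces membership in $\mathrm{Span}\{Q_{r,s}:r\le i+i',\,s\le j+j'\}$. This requires reading off the leading terms from the factorization \eqref{Q}: the Krawtchouk factor $K_j(w-h,q-1,k-h)$ contributes degree exactly $j$ in $k$ (with the top coefficient independent of $h$ up to lower order), and the Hahn factor $Q_{i-j}(n-j,w-j,h)$ contributes degree exactly $i-j$ in $h$ while the Krawtchouk factor contributes a further degree $j$ in $h$ through the shift $k-h$, summing to degree $i$ in $h$. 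Making the triangularity statement precise — and checking that the index set $L$ is compatible with taking products, i.e. that $(r,s)$ with $r\le i+i'$, $s\le j+j'$ and $s\le r$ still lies in $L$, which needs $r-s\le m$ and is where one uses that $(i,j),(i',j')\in L$ — is the technical heart; once it is in place, the degree bound on the product and the vanishing conclusion are immediate.
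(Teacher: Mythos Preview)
Your plan is essentially the paper's own argument: both invoke the identity $Q_{i,j}(k,h)\,Q_{i',j'}(k,h)=\sum_{(r,s)\in L} q_{ij,i'j'}^{rs}\,Q_{r,s}(k,h)$ for the dual eigenvalues and deduce the vanishing from the fact that $Q_{r,s}$ has degree $s$ in $k$ and degree $r$ in $h$. The paper states the degree conclusion in one line and does not spell out the bigraded triangularity you (rightly) isolate as the technical heart; your idea of extracting it from the leading terms in the factorization~\eqref{Q} is exactly what is needed, while your closing worry that the support indices must remain inside $L$ is unnecessary, since the lemma only asserts vanishing for $(r,s)\in L$ and makes no claim that the full support of the product lies in $L$.
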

\begin{proof}
By the general property of association schemes,
(see, e.g., \cite[Theorem~2.22\,(7)]{BBIT21}), 
the second eigenmatrix given in \eqref{Q} satisfies
\[
Q_{i,j}(k,h) Q_{i',j'}(k,h) = \sum_{(r,s) \in L} q_{ij,i'j'}^{rs} Q_{r,s}(k,h)
\]
for all $(i,j), (i',j') \in L$ and $(k,h) \in K$.
Recall that $Q_{i,j}(k,h)$ is of degree $j$ in $k$ and of degree $i$ in $h$.
Hence, the product $Q_{i,j}(k,h) Q_{i',j'}(k,h)$ has degree $j+j'$ in $k$ and $i+i'$ in $h$.
On the right-hand side, each term $Q_{r,s}(k,h)$ has degree $s$ in $k$ and degree $r$ in $h$.
Therefore if $q_{ij,i'j'}^{rs} \neq 0$, we must have  $r \le i+i'$ and $s \le j+j'$.
Equivalently, if $i+i' < r$ or $j+j' < s$, then $q_{ij,i'j'}^{rs} = 0$, which proves the claim.
\end{proof}

\begin{theorem}\label{thm:Fisher}
Let $(r,s) \in L$ with $(r,s) \neq (0,0)$, and define $T = \{(i,j) \in L \mid i \le r, j \le s\} \setminus \{(0,0)\}$.
Then any $T$-design $Y$ in the nonbinary Johnson scheme $J_q(w, n)$ satisfies
\[
|Y| \ge \binom{n}{\lfloor r / 2 \rfloor} \sum_{l = 0}^{\lfloor s / 2 \rfloor} (q-2)^l \binom{\lfloor r / 2 \rfloor}{l}.
\]
\end{theorem}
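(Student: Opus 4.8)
The plan is to run the classical Delsarte absolute-bound argument for designs in $Q$-polynomial schemes, reorganized around the bivariate index set $L$, with Lemma~\ref{lem:vanish} taking over the role played by the univariate $Q$-polynomial property. Put $\tau=\lfloor r/2\rfloor$ and $\sigma=\lfloor s/2\rfloor$, let $V_{ij}=\mathrm{Col}(E_{ij})$ be the $(i,j)$-eigenspace (so $\dim V_{ij}=m_{ij}$), and set $\Gamma=\{(i,j)\in L\mid i\le\tau,\ j\le\sigma\}$ and $\mathcal V=\bigoplus_{(i,j)\in\Gamma}V_{ij}$. For $y\in Y$ define $F_y=(E_{ij}\delta_y)_{(i,j)\in\Gamma}\in\mathcal V$. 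I would show that $\{F_y:y\in Y\}$ spans $\mathcal V$, which immediately gives $|Y|\ge\dim\mathcal V=\sum_{(i,j)\in\Gamma}m_{ij}$; evaluating this sum via the multiplicity formula \eqref{m} then produces the stated right-hand side.

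To prove spanning, suppose $f=(f_{ij})_{(i,j)\in\Gamma}\in\mathcal V$ is orthogonal to every $F_y$. Since $\langle F_y,f\rangle=\sum_{(i,j)\in\Gamma}\langle E_{ij}\delta_y,f_{ij}\rangle=\sum_{(i,j)\in\Gamma}\langle\delta_y,f_{ij}\rangle=\sum_{(i,j)\in\Gamma}f_{ij}(y)$ (using $E_{ij}f_{ij}=f_{ij}$ and symmetry of $E_{ij}$), the orthogonality says precisely that $g:=\sum_{(i,j)\in\Gamma}f_{ij}$, viewed as a function on $X$, vanishes on $Y$. Now consider the Hadamard (entry-wise) square $g\circ g$. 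From $E_{ij}\circ E_{i'j'}=|X|^{-1}\sum_{(k,h)\in L}q_{ij,i'j'}^{kh}E_{kh}$, which forces $u\circ v\in\bigoplus_{(k,h):\,q_{ij,i'j'}^{kh}\ne 0}\mathrm{Col}(E_{kh})$ whenever $u\in\mathrm{Col}(E_{ij})$ and $v\in\mathrm{Col}(E_{i'j'})$, together with Lemma~\ref{lem:vanish} (each such $(k,h)$ satisfies $k\le i+i'$ and $h\le j+j'$), we get $V_{ij}\circ V_{i'j'}\subseteq\bigoplus_{(k,h)\in L,\ k\le i+i',\ h\le j+j'}V_{kh}$. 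As all members of $\Gamma$ have first coordinate $\le\tau$ and second coordinate $\le\sigma$, it follows that $g\circ g\in\bigoplus_{(k,h)\in L,\ k\le 2\tau,\ h\le 2\sigma}V_{kh}$, and since $2\tau\le r$ and $2\sigma\le s$ this space is contained in $V_{00}\oplus\bigoplus_{(k,h)\in T}V_{kh}$.

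Write accordingly $g\circ g=c\,\phi_X+\sum_{(k,h)\in T}p_{kh}$ with $p_{kh}\in V_{kh}$ and $c\in\mathbb R$. On one hand $\langle g\circ g,\phi_Y\rangle=\sum_{y\in Y}g(y)^2=0$ because $g$ vanishes on $Y$. On the other hand, for $(k,h)\in T$ the $T$-design property \eqref{eq:E_ij} gives $E_{kh}\phi_Y=\tfrac{|Y|}{|X|}E_{kh}\phi_X=0$ (as $\phi_X\in V_{00}$), whence $\langle p_{kh},\phi_Y\rangle=\langle E_{kh}p_{kh},\phi_Y\rangle=\langle p_{kh},E_{kh}\phi_Y\rangle=0$; therefore $0=\langle g\circ g,\phi_Y\rangle=c\,\langle\phi_X,\phi_Y\rangle=c\,|Y|$, so $c=0$ (we may assume $Y\ne\emptyset$). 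But $c=|X|^{-1}\langle g\circ g,\phi_X\rangle=|X|^{-1}\|g\|^2$, so $g=0$ and hence each $f_{ij}=0$, i.e.\ $f=0$. Thus $\{F_y\}$ spans $\mathcal V$ and $|Y|\ge\sum_{(i,j)\in\Gamma}m_{ij}$. It remains to evaluate the sum: using \eqref{m}, for each fixed $j$ the telescoping $\sum_{i=j}^{\tau}\bigl(\binom{n-j}{i-j}-\binom{n-j}{i-j-1}\bigr)=\binom{n-j}{\tau-j}$ together with $\binom{n}{j}\binom{n-j}{\tau-j}=\binom{n}{\tau}\binom{\tau}{j}$ gives $\sum_{(i,j)\in\Gamma}m_{ij}=\binom{n}{\tau}\sum_{l=0}^{\sigma}(q-2)^l\binom{\tau}{l}$, which is the claimed bound.

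The step I expect to require the most care is the Hadamard-product containment $V_{ij}\circ V_{i'j'}\subseteq\bigoplus_{k\le i+i',\,h\le j+j'}V_{kh}$: Lemma~\ref{lem:vanish} is phrased for Krein numbers, and one must pass from it to the statement about pointwise products of eigenvectors. This is the bivariate analogue of the classical fact underpinning Delsarte theory — in a $Q$-polynomial scheme the Hadamard product of the $i$-th and $j$-th eigenspaces lies in the span of eigenspaces of index $\le i+j$ — and I would either cite it or include the short verification (via $E_{ij}\circ E_{i'j'}=|X|^{-1}\sum q_{ij,i'j'}^{kh}E_{kh}$ and the resulting support statement for $u\circ v$) before proceeding. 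A minor bookkeeping point is to confirm that, under the hypothesis $(r,s)\in L$, every pair $(i,j)$ with $0\le j\le i\le\tau$, $j\le\sigma$ actually lies in $L$, so that the telescoped multiplicity sum equals the stated right-hand side exactly.
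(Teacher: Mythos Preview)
Your argument is correct and reaches the same bound by a closely related but technically distinct route. The paper works with the matrix $M=\bigl(\sum_{(i,j)\in S}E_{ij}\bigr)\circ\bigl(\sum_{(i,j)\in S}E_{ij}\bigr)$ and compares the diagonal lower bound $\phi_Y^{\top}M\phi_Y\ge\sum_{y}M_{yy}$ against the exact value obtained from the Krein expansion and the $T$-design property; you instead prove that the projections $\{F_y\}_{y\in Y}$ span $\mathcal V$ via a vanishing argument on the pointwise square $g\circ g$ of a putative orthogonal function. Both proofs rest on the same two inputs---Lemma~\ref{lem:vanish} and the annihilation $E_{kh}\phi_Y=0$ for $(k,h)\in T$---and the telescoping of multiplicities at the end is identical. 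Your formulation has the advantage of making the equality case transparent (tightness is precisely $\{F_y\}$ being a basis of $\mathcal V$), whereas the paper's matrix computation is slightly shorter and, notably, never needs the eigenspace-level statement $V_{ij}\circ V_{i'j'}\subseteq\bigoplus_{q\ne0}V_{kh}$, since it only ever Hadamard-multiplies idempotents, not eigenvectors. On that step, your parenthetical derivation is a little optimistic: the columns of $E_{ij}\circ E_{i'j'}$ are only the \emph{diagonal} products $(E_{ij})_{\cdot,a}\circ(E_{i'j'})_{\cdot,a}$, so the matrix identity by itself does not control off-diagonal products $(E_{ij})_{\cdot,a}\circ(E_{i'j'})_{\cdot,b}$. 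The statement is nonetheless standard; the cleanest verification is that if $M$ is the $|X|^2\times|X|$ matrix with $M_{(a,b),x}=(E_{ij})_{xa}(E_{i'j'})_{xb}$, then $M^{\top}M=E_{ij}\circ E_{i'j'}$, whence $\mathrm{span}\{(E_{ij})_{\cdot,a}\circ(E_{i'j'})_{\cdot,b}:a,b\in X\}=\mathrm{Col}(M^{\top})=\mathrm{Col}(E_{ij}\circ E_{i'j'})$.
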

\begin{proof}
Set $S = \{(i,j) \in L \mid i \le \lfloor r / 2 \rfloor, j \le \lfloor s / 2 \rfloor\}$.
Consider the matrix
\[
M = \left(\sum_{(i,j) \in S} E_{i,j}\right) \circ \left(\sum_{(i',j') \in S} E_{i',j'}\right),
\]
where $E_{i,j}$ denotes the primitive idempotent of $J_q(w, n)$ and $\circ$ denotes the entry-wise matrix product.
Since each $E_{i,j}$ has real entries, all entries of $M$ are non-negative.
Moreover, for any $x \in X$,
\[
M_{x,x} = \left(\sum_{(i,j) \in S} (E_{i,j})_{x,x} \right)^2 = \frac{1}{|X|^2}\left(\sum_{(i,j) \in S} m_{i,j} \right)^2,
\]
where $m_{i,j}$ is the multiplicity of $J_q(w, n)$, given by \eqref{m}.
Therefore 
\begin{equation}\label{M1}
\sum_{x,y \in Y} M_{x,y} \ge \sum_{x \in Y} M_{x,x} = \frac{|Y|}{|X|^2}\left(\sum_{(i,j) \in S} m_{i,j} \right)^2.
\end{equation}

On the other hand, by the definition of Krein parameters, $M$ can be expanded using Krein parameters:
\begin{align*}
M &= \sum_{(i,j) \in S} \sum_{(i',j') \in S} \left(E_{i,j} \circ E_{i',j'}\right) \\
&= \frac{1}{|X|}\sum_{(i,j) \in S} \sum_{(i',j') \in S} \sum_{(i'',j'') \in L} q_{ij,i'j'}^{i''j''} E_{i'',j''}.
\end{align*}
Thus,
\[
\sum_{x,y \in Y} M_{x,y} = \frac{1}{|X|}\sum_{(i,j) \in S} \sum_{(i',j') \in S} \sum_{(i'',j'') \in L} q_{ij,i'j'}^{i''j''} \left( \sum_{x,y \in Y} (E_{i'',j''})_{x,y}\right).
\]
Since $Y$ is a $T$-design, the inner sum vanishes whenever $(i'',j'') \in T$.
By Lemma~\ref{lem:vanish}, $q_{ij,i'j'}^{i''j''} = 0$ if $i+i' < i''$ or $j+j' < j''$.
Combine these conditions,
the only surviving term corresponds to $(i'',j'') = (0,0)$.
Therefore, by the general property of Krein parameters,
(see, e.g., \cite[Proposition~2.24\,(3)]{BBIT21}), 
\begin{equation}\label{M2}
\sum_{x,y \in Y} M_{x,y} = \frac{1}{|X|}\sum_{(i,j) \in S} \sum_{(i',j') \in S} q_{ij,i'j'}^{00} \left( \sum_{x,y \in Y} (E_{0,0})_{x,y}\right)
= \frac{|Y|^2}{|X|^2}\left(\sum_{(i,j) \in S} m_{i,j} \right).
\end{equation}

From \eqref{M1} and \eqref{M2}, we have
\[
\frac{|Y|}{|X|^2}\left(\sum_{(i,j) \in S} m_{i,j} \right)^2 \le \sum_{x,y \in Y} M_{x,y} = \frac{|Y|^2}{|X|^2}\left(\sum_{(i,j) \in S} m_{i,j} \right),
\]
which implies
\[
|Y| \ge \sum_{(i,j) \in S} m_{i,j}.
\]
By the formula for multiplicities \eqref{m}, this sum can be computed as
\begin{align*}
\sum_{(i,j) \in S} m_{i,j} &= \sum_{j = 0}^{\lfloor s / 2 \rfloor} \sum_{i = j}^{\lfloor r / 2 \rfloor} m_{ij} \\
&= \sum_{j = 0}^{\lfloor s / 2 \rfloor} (q-2)^j \binom{n}{j} \binom{n-j}{\lfloor r / 2 \rfloor-j} \\
&= \binom{n}{\lfloor r / 2 \rfloor}  \sum_{j = 0}^{\lfloor s / 2 \rfloor} (q-2)^j \binom{\lfloor r / 2 \rfloor}{j}.
\end{align*}
This completes the proof.
\end{proof}

\begin{corollary}
Let $(r,s) \in L$ with $(r,s) \neq (0,0)$ and $r \leq m$, and define $T = \{(i,j) \in L \mid i \le r, j \le s\} \setminus \{(0,0)\}$.
Then any $T$-design $Y$ in the nonbinary Johnson scheme $J_q(w, n)$ satisfies:
\begin{enumerate}
\item If $r = 2e + 1$ and $s = 2f$ for some integers $e, f$, then
\[
|Y| \ge \frac{n}{w}\binom{n-1}{e} \sum_{l = 0}^f (q-2)^l \binom{e}{l}.
\]
\item If $r = 2e + 1$ and $s = 2f+1$ for some integers $e, f$, then
\[
|Y| \ge (q-1)\frac{n}{w}\binom{n-1}{e} \sum_{l = 0}^f (q-2)^l \binom{e}{l}.
\]
\end{enumerate}
\end{corollary}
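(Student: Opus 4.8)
The plan is to deduce both inequalities from Theorem~\ref{thm:Fisher} applied inside the \emph{smaller} nonbinary Johnson scheme $J_q(w-1,n-1)$ — where the odd parameter $r=2e+1$ drops to the even value $2e$, so that the estimate of Theorem~\ref{thm:Fisher} loses nothing — and then to feed the resulting per-coordinate bounds into a global double count over the $n$ coordinates, which produces the factor $n/w$. Since $r=2e+1\le m$, Theorem~\ref{thm:T-design} shows $Y$ is an $(r,s)$-design; write $\lambda$ for its index. Fix $i_0\in N$, put $b(i_0)=|\{y\in Y\mid i_0\in\overline{y}\}|$, and observe that double counting the pairs $(i_0,y)$ with $y\in Y$ and $i_0\in\overline{y}$ gives $\sum_{i_0\in N}b(i_0)=w|Y|$. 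Hence it suffices to prove, for each $i_0$, that $b(i_0)\ge\binom{n-1}{e}\sum_{l=0}^{f}(q-2)^l\binom{e}{l}$ in case~(1) and $b(i_0)\ge(q-1)\binom{n-1}{e}\sum_{l=0}^{f}(q-2)^l\binom{e}{l}$ in case~(2). Note that $r\le m$ forces $2e\le\min\{w-1,n-w\}$ and $(2e,2f)\in L$ for $J_q(w-1,n-1)$, so Theorems~\ref{thm:T-design} and~\ref{thm:Fisher} apply in $J_q(w-1,n-1)$ with the index set $\{(i,j):i\le 2e,\ j\le 2f\}\setminus\{(0,0)\}$, and Theorem~\ref{thm:Fisher} with $(r,s)=(2e,2f)$ gives precisely the value $\binom{n-1}{e}\sum_{l=0}^{f}(q-2)^l\binom{e}{l}$.

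For case~(2), where $s=2f+1$, I would split $\{y\in Y\mid i_0\in\overline{y}\}$ according to the value $a=y_{i_0}\in\overline{F}$. For each $a$, the set $Y^{(a)}=\{(y_i)_{i\in N\setminus\{i_0\}}\mid y\in Y,\ y_{i_0}=a\}$ is the derived design of Proposition~\ref{prop:derived} taken with $r'=s'=1$ and $\mathcal{R}'=\mathcal{S}'=\{i_0\}$, hence a $(2e,2f)$-$(n-1,w-1,q,\lambda)$-design; it is a genuine subset of $W_{w-1}(n-1,q)$ because distinct $y$ sharing the value $a$ at $i_0$ have distinct projections. Theorem~\ref{thm:Fisher} gives $|Y^{(a)}|\ge\binom{n-1}{e}\sum_{l=0}^{f}(q-2)^l\binom{e}{l}$, and summing over the $q-1$ choices of $a$ yields the required bound on $b(i_0)$. (The degenerate case $e=f=0$, i.e.\ $(r,s)=(1,1)$, is immediate: the $(1,1)$-design condition gives $b(i_0)=(q-1)\lambda\ge q-1$.)

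For case~(1), where $s=2f$, one cannot fix a value at $i_0$ without weakening the bound — that would make $Y^{(a)}$ only a $(2e,2f-1)$-design, and the per-value estimate $b(i_0)\ge(q-1)\binom{n-1}{e}\sum_{l=0}^{f-1}(q-2)^l\binom{e}{l}$ is in general not enough. This is the main obstacle, and I would get around it by keeping only the support, not the value, at $i_0$: taking $r'=1$, $s'=0$ in Proposition~\ref{prop:derived}, the derived object $Y'=\{(y_i)_{i\in N\setminus\{i_0\}}\mid y\in Y,\ i_0\in\overline{y}\}$ — read \emph{with multiplicities}, since distinct $y$ containing $i_0$ may project to the same vector — is a $(2e,2f)$-$(n-1,w-1,q,\lambda)$-design whose total multiplicity equals $b(i_0)$. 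The proofs of Theorems~\ref{thm:T-design} and~\ref{thm:Fisher} use nothing about the vector $\phi_Y$ beyond its entries being nonnegative integers (in particular $a^2\ge a$ for each entry $a$, which is all the diagonal estimate on $M$ in Theorem~\ref{thm:Fisher} requires), so they apply verbatim to $Y'$ and give $b(i_0)\ge\binom{n-1}{e}\sum_{l=0}^{f}(q-2)^l\binom{e}{l}$; the degenerate case $(r,s)=(1,0)$ is again immediate, since then $b(i_0)=\lambda\ge 1$. Substituting the two per-coordinate bounds into $w|Y|=\sum_{i_0\in N}b(i_0)$ yields statements~(1) and~(2).
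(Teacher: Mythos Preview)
Your argument is correct and follows the same core idea as the paper: pass to a derived design in $J_q(w-1,n-1)$ (where the odd $r=2e+1$ drops to $2e$) and apply Theorem~\ref{thm:Fisher}. The paper packages this a bit more directly: rather than double counting over all coordinates $i_0$ and, in case~(2), summing over all $a\in\overline F$, it fixes the single coordinate $n$, uses Corollary~\ref{cor:|Y|} to compute $|Y'|=\tfrac{w}{n}|Y|$ in case~(1) (keeping only the support condition at $n$) and $|Y'|=\tfrac{w}{n(q-1)}|Y|$ in case~(2) (fixing $y_n=1$), and then applies Theorem~\ref{thm:Fisher} once. Your explicit treatment of the multiset issue in case~(1)---noting that the proof of Theorem~\ref{thm:Fisher} only requires nonnegative integer entries in $\phi_Y$ via $a^2\ge a$---is a point of rigor the paper leaves implicit (its $Y'$ in case~(1) can likewise have repeated projections).
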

\begin{proof}
By Theorem~\ref{thm:T-design} and Definition~\ref{def:rs-design}, $Y$ is an $(r,s)$-$(n,w,q,\lambda)$-design for some $\lambda$.
\begin{enumerate}
\item Define
\[
Y' = \{(y_1,\ldots,y_{n-1}) \mid (y_1,\ldots,y_n) \in Y, y_n \neq 0\}.
\]
By Proposition~\ref{prop:derived},
$Y'$ forms an $(r-1,s)$-$(n-1,w-1,q,\lambda)$-design.
By Corollary~\ref{cor:|Y|}, we have
\[
|Y'| = (q-1)^s \frac{\binom{n-1}{r-1}}{\binom{w-1}{r-1}} \lambda = \frac{w}{n} |Y|.
\]
Applying Theorem~\ref{thm:Fisher} to $Y'$, we obtain 
\[
|Y'| \ge \binom{n-1}{e} \sum_{l = 0}^f (q-2)^l \binom{e}{l}.
\]
Combining these gives the desired inequality.
\item Define
\[
Y' = \{(y_1,\ldots,y_{n-1}) \mid (y_1,\ldots,y_n) \in Y, y_n = 1\}.
\]
By Proposition~\ref{prop:derived}, $Y'$ forms an $(r-1,s-1)$-$(n-1,w-1,q,\lambda)$-design.
By Corollary~\ref{cor:|Y|}, we have
\[
|Y'| = (q-1)^{s-1} \frac{\binom{n-1}{r-1}}{\binom{w-1}{r-1}} \lambda = \frac{w}{n(q-1)} |Y|.
\]
Applying Theorem~\ref{thm:Fisher} to $Y'$, we obtain 
\[
|Y'| \ge \binom{n-1}{e} \sum_{l = 0}^f (q-2)^l \binom{e}{l}.
\]
Combining these gives the desired inequality.
\end{enumerate}
\end{proof}

\section{Minimal $(r,s)$-designs with $\lambda=1$} \label{sec:6}
From Corollary~\ref{cor:|Y|}, we obtain the natural lower bound
\begin{equation}
|Y|
= (q-1)^{s} \frac{\binom{n}{r}}{\binom{w}{r}} \lambda
\ge
(q-1)^{s} \frac{\binom{n}{r}}{\binom{w}{r}}
\label{eq:natural_bound}
\end{equation}
for any $(r,s)$-design $Y$, and the equality is attained when $\lambda=1$.
For each fixed weight $w$, the bound~\eqref{eq:natural_bound} is stronger
than the one in Theorem~\ref{thm:Fisher} for sufficiently large $n$.
An $(r,s)$-design with $\lambda=1$ achieves the minimum possible
cardinality among all $(r,s)$-designs. 

First, we deal with two trivial cases.
If $r=s$, then $(r,r)$-designs with $\lambda=1$ are exactly $H$-designs. There are many examples of $H$-designs \cite{CaoJiZhu07,E97,HMM86,JiWuZhu05,ZhangGe10}. 
If $r=w$, then the set of supports of an $(r,s)$-design with $\lambda=1$ is the whole set of $w$-subsets of $\{1,\ldots,n\}$, which has $\binom{n}{w}$ points.
On each support, we place all rows of an $OA(v,w,q-1,s)$ with index $1$.

\begin{constructionA} We can construct an $(r,s)$-$(n, w, q, \lambda_1 \lambda_2)$-design with $v_1v_2$ vectors from combinatorial designs and orthogonal arrays as follows.
Let $\mathcal{B}$ be an $r$-$(n,w,\lambda_1)$ design with $|\mathcal{B}|=v_1$.
Let $\mathcal{A}$ be an orthogonal array $OA(v_2, w, q-1, s)$ of index $\lambda_2$.
For each block $B \in \mathcal{B}$ (regarded as a $w$-subset of the coordinate set),
we place all rows of $\mathcal{A}$ on the support $B$.
The collection of all resulting vectors forms the desired $(r,s)$-design.
\end{constructionA}

We focus on the case $\lambda_1=\lambda_2=1$ and $s<r<w<n$. 
There are many existence results for designs with index $\lambda_1=1$ (Steiner systems) \cite{BethJungnickelLenz99,ColbournMathon07} and for orthogonal arrays of index $\lambda_2=1$ \cite{HSS99,LinStufken25}. 
The following are inifinite families of $(2,1)$-designs with index $\lambda=1$. 
\begin{example}
    There exists a $(2,1)$-$(n,3,q,1)$-design for $n \equiv 1,3 \pmod{6}$ and any $q$.  
\end{example}
\begin{proof}
There exists a $2$-$(n,3,1)$-design (Steiner triple system $S(2,3,n)$) for $n \equiv 1,3 \pmod{6}$~\cite{ColbournMathon07}. 
Moreover, there exists a trivial $OA(q-1, 3, q-1, 1)$ for any $n$.  By Construction~A, we obtain the desired design. 
\end{proof}
Remark that if there exists a Steiner system of strength $t$, then we can similarly construct a $(t,1)$-design of index $\lambda=1$ with a trivial orthogonal array.  

By the same argument, we obtain an infinite family of $(3,2)$-designs with index $\lambda=2$. 
\begin{example}
    There exists a $(3,2)$-$(n,4,q,2)$-design for $n \equiv 2,4 \pmod{6}$ and any $q\ne 3,7$.  
\end{example}
\begin{proof}
There exists a $3$-$(n,4,1)$-design (Steiner quadruple system $S(3,4,n)$) for $n \equiv 2,4 \pmod{6}$ \cite{ColbournMathon07}. 
Morevere, there exists $OA((q-1)^2, 4, q-1, 2)$ for any $q$ except for $q=3,7$ \cite{BSP60,HSS99}. By Construction~A, we obtain the desired design. 
\end{proof}

In addition to the above construction, we have an example of index 1 in Figure \ref{fig:example-array-5col}. 
Figure~\ref{fig:example-array} is another non-trivial example of index 3, which is not obtained by Construction A. 
These examples are constructed by hand, and we anticipate that they admit a natural generalization. 
\begin{problem}
    Construct $(r,s)$-designs with index $\lambda=1$ which are not obtained from Construction~A. 
\end{problem}

\begin{figure}[htbp]
  \centering

  \begin{minipage}{0.48\linewidth}
    \centering
    \[
    \begin{array}{ccccc}
    0 & 0 & 1 & 1 & 1 \\
    0 & 1 & 0 & 2 & 2 \\
    0 & 2 & 2 & 0 & 3 \\
    1 & 0 & 0 & 3 & 3 \\
    2 & 0 & 3 & 0 & 2 \\
    3 & 3 & 0 & 0 & 1 \\
    3 & 0 & 2 & 2 & 0 \\
    2 & 2 & 0 & 1 & 0 \\
    1 & 1 & 1 & 0 & 0 \\
    0 & 3 & 3 & 3 & 0 
    \end{array}
    \]
    \caption{$(2,1)$-$(5,3,4,1)$-design with $10$ vectors}
    \label{fig:example-array-5col}
  \end{minipage}
  \hfill
  \begin{minipage}{0.48\linewidth}
    \centering
    \[
    \begin{array}{cccccc}
    0 & 0 & 1 & 1 & 1 & 1 \\
    0 & 1 & 0 & 2 & 2 & 2 \\
    0 & 1 & 1 & 0 & 1 & 2 \\
    0 & 2 & 2 & 2 & 0 & 1 \\
    0 & 2 & 2 & 1 & 2 & 0 \\
    1 & 0 & 0 & 1 & 1 & 1 \\
    1 & 0 & 2 & 0 & 2 & 2 \\
    2 & 0 & 2 & 2 & 0 & 2 \\
    2 & 0 & 1 & 2 & 2 & 0 \\
    2 & 2 & 0 & 0 & 2 & 1 \\
    2 & 2 & 0 & 1 & 0 & 2 \\
    1 & 1 & 0 & 2 & 1 & 0 \\
    1 & 1 & 1 & 0 & 0 & 1 \\
    2 & 2 & 2 & 0 & 1 & 0 \\
    1 & 1 & 1 & 1 & 0 & 0
    \end{array}
    \]
    \caption{$(2,1)$-$(6,4,3,3)$-design with $15$ vectors}
    \label{fig:example-array}
  \end{minipage}

\end{figure}

\section{Concluding remarks} 
This paper gives a combinatorial characterization of $T$-designs in the nonbinary Johnson scheme for a specific degree set $T$. 
The degree set $T$ arises from a natural ordering associated with the bivariate $Q$-polynomial property. 
We also establish the Fisher-type bounds for $T$-designs (Theorem~\ref{thm:Fisher}).

A $T$-design attaining the Fisher-type bound is called \emph{tight}. 
Tight block $t$-designs in the Johnson scheme are rare; see \cite[Section~2.2]{BBTZ17}. 
The nonexistence of tight block designs is established by showing that the zeros of certain one-variable orthogonal polynomials are not integral \cite{Bannai77,DukesSG13}. 
A similar relationship between the existence of tight designs and the zero distribution of orthogonal polynomials holds for one-variable $Q$-polynomial schemes. 
We expect that this relationship can be extended to multivariate $Q$-polynomial schemes, and aim to establish non-existence results for tight $T$-designs in the nonbinary Johnson scheme.

For classical $Q$-polynomial schemes, one of the main tools for analyzing designs is the linear programming bound based on one-variable orthogonal polynomials; see \cite[Section~6]{BBTZ17} and \cite[Theorem~5.10]{DGS77}. 
For multivariate $Q$-polynomial schemes, it is natural to expect a generalization of the linear programming bound using multivariate orthogonal polynomials.

\bigskip

\noindent
\textbf{Acknowledgments.} 
 H. Nozaki was partially supported by JSPS KAKENHI Grant Numbers JP22K03402 and JP24K06688. 
 Y. Watanabe was partially supported by JSPS KAKENHI Grant Number JP23K12953. 

 \bigskip

 \noindent 
 \textbf{Data Availability Statement.} 
This article is purely theoretical and does not involve any datasets.

\end{document}